\documentclass[12pt]{amsart}

%
%
%
\usepackage{amsmath, amssymb, amscd, newlfont}

%
\setlength{\oddsidemargin}{0in}
\setlength{\evensidemargin}{0in}
\setlength{\textwidth}{6.5in}
\setlength{\textheight}{8.5in}

\newcommand{\Hom}{{\mathrm{Hom}}}

\newcommand{\Ann}{{\mathrm{Ann}}}
\newcommand{\Irr}{{\mathrm{Irr}}}

\newcommand{\supp}{{\mathrm{supp}}}

\newcommand{\Ind}{{\mathrm{Ind}}}

\newcommand{\calO}{{\mathcal{O}}}

\usepackage[mathscr]{eucal}

\numberwithin{equation}{section}

\newtheorem{Prop}[equation]{Proposition}
\newtheorem{Lem}[equation]{Lemma}
\newtheorem{Def}[equation]{Definition}
\newtheorem{Thm}[equation] {Theorem}
\newtheorem{Cor}[equation]{Corollary}


\title
  {On Ideals Defining  Irreducible Representations of  Reductive $p$--adic Groups}
\author{ Goran Mui\' c}

\address{ Department of Mathematics, Faculty of Science,
University of Zagreb,
Bijeni\v cka 30, 10000 Zagreb,
Croatia}
\email{gmuic@math.hr}
\subjclass[2010]{11E70, 22E50}
\keywords{Reductive $p$--adic groups, admissible representations, Hecke algebras}
\thanks{The  author acknowledges Croatian Science Foundation grant IP-2018-01-3628.}

\begin{document}

\begin{abstract}
  Let $G$ be a reductive $p$--adic group. Assume that $L\subset G$ is an open--compact subgroup, and $\mathcal H_L$
   is the Hecke algebra of $L$--biinvariant complex functions on $G$.  It is a well--known and standard result about how to prove existence of
   a complex smooth irreducible $G$--module  out of a maximal left ideal $I\subset \mathcal H_L$. Using theory about Bernstein center we make this construction explicit.
   This leads us to some very interesting questions. 
   \end{abstract} 
\maketitle

\section{Introduction}\label{intr}

 Let $k$ be a non--Archimedean local field. Let $\mathcal O\subset k$ be its ring of integers, and
 let $\varpi$ be  a generator of the maximal ideal in $\calO$. Let $q$ be the number of elements in the residue field $\mathcal O/ \varpi\mathcal O$.
 Assume that is $G$  is a   Zariski connected reductive group defined over $k$. By abuse of notation, we write $G$ for the group of $k$--points. Similarly we do for algebraic subgroups defined
 over $k$. We fix a  Haar measure $dy$ on $G$, and consider  $\mathcal H=C_c^\infty(G)$  as an associative algebra under the convolution defined with respect to $dy$.
 For any open compact subgroup $L\subset G$, we let  $\mathcal H_L$ be the subalgebra of $\mathcal H$ consisting of all $L$--binvaiant functions in  $\mathcal H$. This is an associative algebra
 with identity $\epsilon_L$ (see (\ref{prelim-00})).

 In this paper maximal ideals are assumed to be proper. We will be concerned with the following simple and well--known result:
 Let $W$ be a (finite--dimensional) irreducible unital $\mathcal H_L$--module,
then there exists a unique up to an isomorphism irreducible smooth $G$--module $V$ such that $V^L$ is isomorphic to $W$ as a $\mathcal H_L$--modules
(see \cite{BZ}, Proposition 2.10 c)). All subsequent proofs of this result that we are able to find are not very constructive. In (\cite{muic}, Theorem 3-9) we give a very explicit construction of the
representation $V$ once we  fix a maximal left ideal in $\mathcal H_L$ such that $W\simeq \mathcal H_L/I$ (see elementary Lemma \ref{prelim-01} about relation between different possibilities for
left ideal $I$; $I$ is uniquely determined by $W$ if $\mathcal H_L$ is commutative).

So, let us fix $L\subset G$ an open compact subgroup, and let $I\subset \mathcal H_L$ be a maximal  left ideal. Following (\cite{muic}, Theorem 3-9) (see Lemma \ref{prelim-1}), we put
$$
J_{I, L}\overset{def}{=} \text{sum of all proper left ideals
  in $\mathcal H\star \epsilon_L$ which contain $\mathcal H\star I$.}
$$
Then, $J_{I, L}$ is a unique maximal left ideal in $\mathcal H\star \epsilon_L$ which contains $I$. The corresponding irreducible smooth $G$--module $ \mathcal V(I, L)$ (see Lemma \ref{prelim-1})
has space of $L$--invariants isomorphic to $\mathcal H_L/I$ as $\mathcal H_L$--modules. It is observed in (\cite{muic}, Theorem 3-9) (see Lemma \ref{prelim-1} (v)) that a smooth $G$--module
$$
  \mathcal W(I, L)\overset{def}{=} \mathcal H \star \epsilon_L/\mathcal H\star I
  $$
has a unique maximal proper subrepresentation, and the corresponding quotient is
$\mathcal V(I, L)$. The canonical projection $\mathcal W(I, L)^L\longrightarrow \mathcal V(I, L)^L$ is an isomorphism of $\mathcal H_L$--modules.

The goal of this note is to understand  the module $\mathcal W(I, L)$. We start with an important case. Assume that $L$ has an Iwahori factorization  (see \cite{bushnell}, 1.2).
Then $\mathcal W(I,  L)$ being generated by  $\mathcal W(I, L)^L$  has property that every submodule has a non--zero vector invariant under $L$ by a  rather deep result
(\cite{Be}, 3.9, see also \cite{renard}, Corollaire VI.9.4 for the proof, and \cite{bushnell}, 1.2).
This immediately implies the   result:
$$
\mathcal W(I,  L)=\mathcal V(I, L). 
$$
Hence, we have proved the following proposition:

\begin{Prop}\label{intr-m-0} Let $L\subset G$ be an open compact subgroup  having an Iwahori factorization, and  $I\subset \mathcal H_L$  be a maximal left ideal.
  Then, $\mathcal W(I,  L)=\mathcal V(I, L)$, and, consequently, $J_{I, L}=\mathcal H\star I$.
\end{Prop}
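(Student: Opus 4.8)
The plan is to show that the $G$-module $\mathcal{W}(I,L)$ is already irreducible; once this is established, the Proposition follows formally from Lemma \ref{prelim-1}. Indeed, $\mathcal{W}(I,L)$ surjects canonically onto the irreducible (hence nonzero) module $\mathcal{V}(I,L)$, so if $\mathcal{W}(I,L)$ is irreducible this surjection is an isomorphism, i.e. $\mathcal{W}(I,L)=\mathcal{V}(I,L)$. Moreover the kernel of $\mathcal{H}\star\epsilon_L\twoheadrightarrow\mathcal{W}(I,L)$ is $\mathcal{H}\star I$ by definition, while the kernel of $\mathcal{H}\star\epsilon_L\twoheadrightarrow\mathcal{V}(I,L)$ is $J_{I,L}$; equality of the two quotients therefore gives $J_{I,L}=\mathcal{H}\star I$.

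To prove the irreducibility I would proceed in three steps. \emph{(i)} The module $\mathcal{W}(I,L)=\mathcal{H}\star\epsilon_L/\mathcal{H}\star I$ is cyclic, generated by the image of $\epsilon_L$, which is $L$-fixed; hence $\mathcal{W}(I,L)$ is generated by $\mathcal{W}(I,L)^L$. \emph{(ii)} Since the functor of $L$-fixed vectors is exact on smooth $G$-modules (averaging against $\epsilon_L$), and $(\mathcal{H}\star\epsilon_L)^L=\mathcal{H}_L$, while $(\mathcal{H}\star I)^L=\epsilon_L\star\mathcal{H}\star I=\mathcal{H}_L\star I=I$ because $I\subseteq\mathcal{H}_L$ is a left ideal, applying $(-)^L$ to $0\to\mathcal{H}\star I\to\mathcal{H}\star\epsilon_L\to\mathcal{W}(I,L)\to 0$ yields $\mathcal{W}(I,L)^L\cong\mathcal{H}_L/I$, an irreducible $\mathcal{H}_L$-module since $I$ is a maximal left ideal (this is consistent with Lemma \ref{prelim-1} (v)). \emph{(iii)} Now let $0\neq M\subseteq\mathcal{W}(I,L)$ be a $G$-submodule. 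Because $L$ has an Iwahori factorization and $\mathcal{W}(I,L)$ is generated by its $L$-fixed vectors, the theorem of Bernstein (\cite{Be}, 3.9; see also \cite{renard}, Corollaire VI.9.4, and \cite{bushnell}, 1.2) applies and gives $M^L\neq 0$. Then $M^L$ is a nonzero $\mathcal{H}_L$-submodule of the irreducible module $\mathcal{W}(I,L)^L$, hence $M^L=\mathcal{W}(I,L)^L$; by (i) this forces $M=\mathcal{W}(I,L)$. Thus $\mathcal{W}(I,L)$ is irreducible.

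The only substantial ingredient is the cited theorem of Bernstein, which I would invoke as a black box; granting it, the argument above is essentially bookkeeping with the exact functor $(-)^L$ and the definitions recalled in Lemma \ref{prelim-1}. Accordingly the main obstacle does not lie in this proof but in \cite{Be}, 3.9 itself — the statement that, when $L$ admits an Iwahori factorization, every nonzero submodule of a smooth $G$-module generated by its $L$-fixed vectors contains a nonzero $L$-fixed vector — and I would not attempt to reprove that here.
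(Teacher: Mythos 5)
Your proposal is correct and follows the same route as the paper: invoke Bernstein's result that when $L$ has an Iwahori factorization every submodule of a $G$-module generated by its $L$-fixed vectors has a nonzero $L$-fixed vector, combine it with the irreducibility of $\mathcal{W}(I,L)^L\cong\mathcal{H}_L/I$ (already recorded in Lemma~\ref{prelim-1}~(v)), and conclude that $\mathcal{W}(I,L)$ is irreducible, hence equal to $\mathcal{V}(I,L)$. The paper compresses this into a single sentence ("this immediately implies the result"); you have merely unpacked that implication and rederived the isomorphism $\mathcal{W}(I,L)^L\cong\mathcal{H}_L/I$ from exactness of $(-)^L$, which is fine but not a new argument.
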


This is a particular case  of the following general theorem which is proved using techniques from (\cite{Be}, \cite{bdk}).  We remark that a hyperspecial maximal compact subgroup does
not posses an Iwahori  factorization.

\begin{Thm}\label{intr-m-1} Let $L\subset G$ be an open compact, and  $I\subset \mathcal H_L$  be a maximal left ideal. Then, $\mathcal W(I,  L)$ is an admissible smooth $G$--module of finite length,
  all of its irreducible subquotients have the same infinitesimal character (see \cite{bdk} for definition or Section \ref{bc} in this paper), 
it has  a unique maximal proper subrepresentation, and the corresponding quotient is unique up to an isomorphism irreducible smooth $G$--module which space of  $L$--invariants is
  isomorphic to $\mathcal H_L/I$ as a $\mathcal H_L$--module.
\end{Thm}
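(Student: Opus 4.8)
To prove Theorem \ref{intr-m-1}, the plan is as follows. Most of the asserted internal structure of $\mathcal{W}(I,L)$ is already available: by Lemma \ref{prelim-1} (that is, \cite{muic}, Theorem 3-9) the module $\mathcal{W}(I,L)=\mathcal{H}\star\epsilon_L/\mathcal{H}\star I$ is cyclic, generated by the image $w_0$ of $\epsilon_L$, it has a unique maximal proper subrepresentation with irreducible quotient $\mathcal{V}(I,L)$, and $\mathcal{W}(I,L)^L\cong\mathcal{H}_L/I\cong\mathcal{V}(I,L)^L$ as $\mathcal{H}_L$-modules. Thus what remains to establish is: (a) all irreducible subquotients share a common infinitesimal character; (b) $\mathcal{W}(I,L)$ is admissible; (c) $\mathcal{W}(I,L)$ has finite length. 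Part (a) will be read off from the maximal ideal $I$ through the Bernstein center, and (b)--(c) will then follow from the structure theory of Bernstein blocks.

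For (a): let $\mathfrak{Z}$ denote the Bernstein center, acting on every object of the category $\mathfrak{R}(G)$ of smooth $G$-modules, and let $\mathfrak{Z}\to Z(\mathcal{H}_L)$, $z\mapsto z\star\epsilon_L$, be the canonical homomorphism into the center of $\mathcal{H}_L$. The simple $\mathcal{H}_L$-module $W:=\mathcal{H}_L/I$ is finite dimensional over $\mathbb{C}$ (recall from \cite{Be} that $\mathcal{H}_L$ is a finite module over the Noetherian, finitely generated $\mathbb{C}$-algebra $Z(\mathcal{H}_L)$), so $Z(\mathcal{H}_L)$ acts on $W$ through an algebra character $\chi$; let $\theta\colon\mathfrak{Z}\to\mathbb{C}$ be the composite character, an infinitesimal character in the sense of \cite{bdk}. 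I would then check that $\mathfrak{Z}$ acts on all of $\mathcal{W}(I,L)$ by the scalar $\theta$: for $z\in\mathfrak{Z}$ the element $z\star\epsilon_L$ lies in $\mathcal{H}_L$, and $z\cdot w_0$ is its image in $\mathcal{W}(I,L)^L=W$, namely $\chi(z\star\epsilon_L)\,w_0=\theta(z)\,w_0$; since the action of $\mathfrak{Z}$ commutes with that of $G$, equivalently of $\mathcal{H}$, and $\mathcal{W}(I,L)=\mathcal{H}\cdot w_0$, it follows that $z\cdot v=\theta(z)\,v$ for every $v\in\mathcal{W}(I,L)$. In particular every irreducible subquotient of $\mathcal{W}(I,L)$ has infinitesimal character $\theta$.

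For (b) and (c): the character $\theta$ is supported on a single Bernstein component $\mathfrak{s}$, since among the orthogonal block idempotents $e_{\mathfrak{t}}$ of $\mathfrak{Z}$ summing to its unit exactly one satisfies $\theta(e_{\mathfrak{s}})=1$; as $e_{\mathfrak{s}}$ then acts as the identity on $\mathcal{W}(I,L)$, this module is an object of the block $\mathfrak{R}_{\mathfrak{s}}(G)$. By Bernstein's structure theory (\cite{Be}; see also \cite{bdk}), $\mathfrak{R}_{\mathfrak{s}}(G)$ is equivalent to the category of modules over an algebra $A_{\mathfrak{s}}$ that is a finitely generated module over its center $\mathfrak{Z}_{\mathfrak{s}}$, the latter being a finitely generated $\mathbb{C}$-algebra. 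Under this equivalence $\mathcal{W}(I,L)$, being finitely generated, corresponds to a finitely generated $A_{\mathfrak{s}}$-module $M$ on which $\mathfrak{Z}_{\mathfrak{s}}$ acts through the character $\theta|_{\mathfrak{Z}_{\mathfrak{s}}}$; hence, with $\mathfrak{m}_{\theta}:=\ker(\theta|_{\mathfrak{Z}_{\mathfrak{s}}})$, $M$ is a finitely generated module over the finite-dimensional $\mathbb{C}$-algebra $A_{\mathfrak{s}}/\mathfrak{m}_{\theta}A_{\mathfrak{s}}$, so $\dim_{\mathbb{C}}M<\infty$ and $M$ is of finite length. Transporting back through the equivalence, $\mathcal{W}(I,L)$ has finite length as a smooth $G$-module; and a smooth $G$-module of finite length is admissible, since irreducible smooth $G$-modules are admissible by a theorem of Bernstein and admissibility is preserved under extensions. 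This proves (b) and (c), and Proposition \ref{intr-m-0} is recovered as the special case in which $\mathcal{W}(I,L)$ happens to be irreducible.

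The genuinely substantive step is (c). Once the action of the Bernstein center on the cyclic generator $w_0$ has been computed, parts (a) and (b) are essentially formal; but (c) rests on Bernstein's finiteness theorem, namely that within each block the relevant Hecke algebras (equivalently the governing algebra $A_{\mathfrak{s}}$) are finite modules over a finitely generated commutative center $\mathfrak{Z}_{\mathfrak{s}}$. The main care required is in citing the precise finiteness statements of \cite{Be} and \cite{bdk} and in matching up the infinitesimal (central) characters across the category equivalence.
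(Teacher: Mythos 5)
Your proof is correct and its overall strategy coincides with the paper's: establish that the Bernstein center $\mathcal Z(G)$ acts on $\mathcal W(I,L)$ by a single character $\theta$, then invoke Bernstein's finiteness results to obtain admissibility and finite length. The two arguments diverge, however, in how each step is carried out.

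To pin down the character, the paper first applies the Decomposition Theorem (Theorem \ref{bc-5}) to place $\mathcal W(I,L)$ in a single Bernstein component $\Theta$ (using that the irreducible $\mathcal H_L$--module $\mathcal W(I,L)^L$ lies in a single component and generates $\mathcal W(I,L)$), then reads off $\theta$ as the infinitesimal character of the irreducible module $\mathcal V(I,L)$, transferred to $\mathcal W(I,L)^L$ via the isomorphism $\mathcal W(I,L)^L\simeq\mathcal V(I,L)^L$. You instead compute directly on $W=\mathcal H_L/I$ using $\mathcal Z(G)\to Z(\mathcal H_L)$. Your route is shorter, but the step you gloss over — that $W$ is finite--dimensional, ``since $\mathcal H_L$ is a finite module over the Noetherian, finitely generated $\mathbb C$--algebra $Z(\mathcal H_L)$'' — is not a one--liner: one either needs a PI/Nullstellensatz argument for simple modules over algebras finite over an affine center (which must be run carefully to avoid the circularity of invoking Schur before knowing finite--dimensionality), or, far more simply, one just observes that $W\simeq\mathcal V(I,L)^L$ with $\mathcal V(I,L)$ irreducible smooth and hence admissible by Jacquet's theorem, which is already quoted in Section \ref{prelim}. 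The paper's identification of $\theta$ is built to exploit exactly this shortcut, so it is worth replacing your justification by that one sentence.

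For admissibility and finite length, the paper applies Theorem \ref{bc-4} directly: $\mathcal W(I,L)$ is finitely generated, hence $\mathcal Z(G)$--admissible, and since $\mathcal Z(G)$ acts by the character $\theta$ each $\mathcal W(I,L)^{L'}$ is finite--dimensional; finite length then follows from (finitely generated $+$ admissible). You instead pass through the category equivalence of the Bernstein block $\mathfrak R_{\mathfrak s}(G)$ with $A_{\mathfrak s}$--modules. This is correct, but it invokes heavier machinery than the proof in the paper needs; Theorem \ref{bc-4} gets you there without transporting across an equivalence. One small slip at the end: admissibility of irreducibles is a theorem of Jacquet, not Bernstein. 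Finally, your closing remark that Proposition \ref{intr-m-0} ``is recovered as the special case in which $\mathcal W(I,L)$ happens to be irreducible'' is misleading: irreducibility of $\mathcal W(I,L)$ when $L$ has an Iwahori factorization is not a formal consequence of Theorem \ref{intr-m-1}; it uses the separate deep input (\cite{Be}, 3.9) that every submodule of a module generated by its $L$--invariants has nonzero $L$--invariants.
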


In Section \ref{prelim}, we consider  elementary theory of Hecke algebras and its ideals. In Section \ref{bc}, we recall from \cite{bdk} some results
about the Bernstein center  needed in the proof of Theorem \ref{intr-m-1}. We also prove a few more results.
Finally, we prove Theorem \ref{intr-m-1} in Section \ref{pmr}.  Now, we discuss some of the consequences.

Let $V$ be an irreducible smooth $G$--module. For each open compact subgroup $L\subset G$ such that $V^L\neq 0$ we fix a maximal  left ideal in $\mathcal H_L$,
say $I_{V, L}$, in the equivalence class determined by $\mathcal H_L$--module $V^L$ (see Lemma \ref{prelim-01}).

The number of irreducible smooth $G$--modules with the same infinitesimal character is finite (see \cite{bdk} or
Section \ref{bc}). Therefore, for sufficiently small  open compact subgroup $L_0\subset G$, all of them have a non--zero vector  invariant under $L$ where $L$ is any
open--compact subgroup of $G$ contained in $L_0$. So, we obtain

\begin{Cor}\label{intr-m-3} Let $V$ be an irreducible smooth $G$--module. Then,
  there exists an open compact subgroup $L_0\subset G$ depending on $V$ such that for any open compact subgroup $L\subset L_0$, we have
  $\mathcal W(I_{V, L}, L)=V$.  
\end{Cor}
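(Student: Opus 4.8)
The plan is to flesh out the sketch preceding the statement: use Theorem~\ref{intr-m-1} to see that $\mathcal W(I_{V,L},L)$ has finite length with all composition factors drawn from a fixed finite list, and then count dimensions of spaces of $L$-fixed vectors to show that the only composition factor is $V$. First I would fix the infinitesimal character of $V$ and enumerate the finitely many irreducible smooth $G$-modules with that infinitesimal character (finiteness is recalled in Section~\ref{bc}, after \cite{bdk}), say $V=V_1,V_2,\dots,V_n$. By smoothness each $V_i$ has a nonzero vector fixed by some open compact subgroup $L_i$, and I set $L_0:=\bigcap_{i=1}^{n}L_i$, again an open compact subgroup. Then for every open compact $L\subseteq L_0$ we have $L\subseteq L_i$, hence $V_i^{L}\supseteq V_i^{L_i}\neq 0$ for all $i$; in particular $V^{L}\neq 0$, so the ideal $I_{V,L}$ is defined (Lemma~\ref{prelim-01}).

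Now fix such an $L$ and abbreviate $\mathcal W:=\mathcal W(I_{V,L},L)$. By Theorem~\ref{intr-m-1} the module $\mathcal W$ is admissible of finite length, all of its irreducible subquotients share a single infinitesimal character, and the quotient of $\mathcal W$ by its unique maximal proper subrepresentation is the irreducible $G$-module whose space of $L$-invariants is $\mathcal H_L/I_{V,L}\cong V^{L}$; by the uniqueness statement (\cite{BZ}, Proposition 2.10 c)) this quotient is $V$ itself. Consequently every composition factor of $\mathcal W$ is isomorphic to one of $V_1,\dots,V_n$, and if $m_i$ denotes the multiplicity of $V_i$ in a Jordan--H\"older series of $\mathcal W$, then $m_1\geq 1$ because $V=V_1$ is a quotient. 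Since $L$ is compact, the functor of $L$-invariants $M\mapsto M^{L}=\epsilon_L\star M$ is exact, so
\[
\dim_{\bbC}\mathcal W^{L}\;=\;\sum_{i=1}^{n} m_i\,\dim_{\bbC} V_i^{L}.
\]
On the other hand, by Theorem~\ref{intr-m-1} (equivalently Lemma~\ref{prelim-1}(v)) the canonical projection $\mathcal W^{L}\to\mathcal V(I_{V,L},L)^{L}=V^{L}$ is an isomorphism, so $\dim_{\bbC}\mathcal W^{L}=\dim_{\bbC}V^{L}=\dim_{\bbC}V_1^{L}$.

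Comparing the two displays, $\sum_{i=1}^{n}m_i\dim_{\bbC}V_i^{L}=\dim_{\bbC}V_1^{L}$, and since $m_1\geq 1$ while $\dim_{\bbC}V_i^{L}\geq 1$ for every $i$ (this is exactly where the choice of $L_0$ enters), we are forced to have $m_1=1$ and $m_i=0$ for $i\geq 2$. Hence $\mathcal W(I_{V,L},L)$ has a single composition factor, namely $V$, its unique maximal proper subrepresentation is zero, and $\mathcal W(I_{V,L},L)=V$. I expect the only point that requires care is the uniformity in $L$: one must choose $L_0$ so that \emph{all} of $V_1,\dots,V_n$ — not merely those known a priori to occur in $\mathcal W$ — acquire nonzero $L$-invariants for every open compact $L\subseteq L_0$, which is precisely why the intersection $\bigcap_i L_i$ is taken; the remaining steps are the finite-length and infinitesimal-character bookkeeping already supplied by Theorem~\ref{intr-m-1}.
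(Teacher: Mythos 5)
Your proof is correct and takes essentially the same approach as the paper, which gives only the brief sketch preceding the corollary: finitely many irreducibles share the infinitesimal character of $V$, choose $L_0$ so that all of them acquire nonzero $L$-fixed vectors for every open compact $L\subseteq L_0$, and then compare dimensions of $L$-invariants using $\mathcal W(I_{V,L},L)^L\cong V^L$ together with the finite-length and fixed-infinitesimal-character statements of Theorem~\ref{intr-m-1}. Your write-up simply fills in the dimension-counting details that the paper leaves implicit.
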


Finally, we state the following corollary:

\begin{Cor}\label{intr-m-4} Let $V$ be an irreducible smooth $G$--module.
If the infinitesimal character of $V$ is in general position (see Definition \ref{bc-1}),
then  $\mathcal W(I_{V, L}, L)=V$ for all $L$ such that  $V^L\neq 0$.
\end{Cor}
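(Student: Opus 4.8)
The plan is to combine Theorem \ref{intr-m-1} with the fact that an infinitesimal character in general position is realized by a single irreducible module. Put $I=I_{V,L}$ and $\mathcal W=\mathcal W(I,L)$. From the construction (Lemma \ref{prelim-1}) one has $\mathcal W^L\cong\mathcal V(I,L)^L\cong\mathcal H_L/I$ as $\mathcal H_L$-modules — concretely, applying the exact functor $\epsilon_L\star(-)$ to the defining quotient and using $\epsilon_L\star\mathcal H\star\epsilon_L=\mathcal H_L$ and $\mathcal H_L\star I=I$. Since $I$ was chosen in the equivalence class of the $\mathcal H_L$-module $V^L$ (Lemma \ref{prelim-01}), this reads $\mathcal W^L\cong V^L$, which is finite-dimensional ($V$ being admissible) and nonzero by hypothesis. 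By Theorem \ref{intr-m-1}, $\mathcal W$ is admissible of finite length, all of its irreducible subquotients carry one and the same infinitesimal character, and the quotient of $\mathcal W$ by its unique maximal proper subrepresentation is the irreducible smooth $G$-module whose space of $L$-invariants is $\cong\mathcal H_L/I\cong V^L$; by the uniqueness in \cite{BZ}, Proposition 2.10 c), that quotient is $V$ itself. Hence the common infinitesimal character of the subquotients of $\mathcal W$ is that of $V$.

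Now I would invoke the hypothesis. By Definition \ref{bc-1} and the description of the Bernstein center recalled in Section \ref{bc}, an infinitesimal character in general position is attained by exactly one irreducible smooth $G$-module (the parabolic induction of its cuspidal support is already irreducible, and every irreducible with that infinitesimal character occurs in such an induced module, hence equals it). Applied to $V$, this shows every irreducible subquotient of $\mathcal W$ is isomorphic to $V$. Since $M\mapsto M^L=\epsilon_L\star M$ is an exact functor on smooth $G$-modules ($\epsilon_L$ being idempotent in $\mathcal H$), running through a composition series of $\mathcal W$ of length $n$ gives $\dim\mathcal W^L=n\cdot\dim V^L$. Comparing with $\dim\mathcal W^L=\dim V^L$ and using $\dim V^L\ge 1$ forces $n=1$, so $\mathcal W(I_{V,L},L)=V$; as in Proposition \ref{intr-m-0} one also reads off $J_{I_{V,L},L}=\mathcal H\star I_{V,L}$.

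Beyond Theorem \ref{intr-m-1} and the exactness-and-dimension bookkeeping for $L$-invariants, the only substantive ingredient is the implication ``infinitesimal character in general position $\Longrightarrow$ a unique irreducible smooth $G$-module realizes it''. This is where the precise content of Definition \ref{bc-1} and the Bernstein-center material of Section \ref{bc} are used, and I expect it to be the one point that must be pinned down carefully; the remainder of the argument is formal. (One could also argue without the uniqueness of the irreducible constituent by the route of Corollary \ref{intr-m-3}, noting that in general position every irreducible with the infinitesimal character of $V$ has nonzero $L$-invariants whenever $V^L\neq 0$, so that any nonzero submodule of $\mathcal W$ meets $\mathcal W^L$; but the dimension count above is cleaner.)
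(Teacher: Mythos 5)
Your proposal is correct and is essentially the paper's argument: both invoke Theorem~\ref{intr-m-1} to get finite length, a common infinitesimal character among all subquotients, and $V$ as the top quotient; both then use that general position forces $V$ to be the unique irreducible with that infinitesimal character; and both conclude the composition length is $1$ by comparing $\mathcal W(I_{V,L},L)^L\cong V^L$ against the exactness of $M\mapsto M^L$. The paper phrases the final step as ``$\mathcal W(I,L)^L$ is irreducible'' rather than via a dimension count, but these are the same observation.
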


This follows immediately from  Theorem \ref{intr-m-1} again since $\mathcal W(I,  L)^L$ is irreducible, and  
$V$ is the only irreducible $G$--module with the infinitesimal character of $V$  (see Definition \ref{bc-1}).
We remark that the set of infinitesimal characters is an affine variety (\cite{bdk} or Section \ref{bc} here), and
there exists Zariski open subset of that variety consisting of infinitesimal characters in general position (see Lemma \ref{bc-2}).

\vskip .2in
We note that $\mathcal W(I, L)$ is not always irreducible. Before we state the result, we introduce some notation.
Let $A_0$ be a maximal $k$--split torus of $G$.  Let $P_0$ be a minimal $k$--parabolic subgroup of $G$ corresponding to some choice of positive roots of $A_0$ in $G$.
Let $U_0$ be the unipotent radical of $P_0$. Let  $K$ be a compatible hyperspecial maximal compact subgroup.  We have the following result:

\begin{Prop}\label{intr-m-5}   Assume that  $V$ is a  $K$--spherical irreducible smooth $G$--module with its data in the
  Langlands classification of irreducible representations supported on $P_0$ i.e., there exists an unramified character $\chi$ of the Levi subgroup $M_0=Z_{G}(A_0)$ of $P_0$ satisfying usual positiveness
  conditions such that  $V$ is a unique irreducible quotient  of the parabolically induced representation  $\Ind_{P_0}^G(\chi)$ (see for example \cite{renard}, Theorem VII.4.2).
  Then,
  $$\mathcal W(I_{V, K}, K) \simeq \Ind_{P_0}^G(\chi).
  $$ 
\end{Prop}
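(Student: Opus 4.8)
The plan is to identify $\mathcal W(I_{V,K}, K) = \mathcal H \star \epsilon_K / \mathcal H \star I_{V,K}$ explicitly as a module, using the classical description of the spherical Hecke algebra via the Satake isomorphism together with the structural results of Theorem \ref{intr-m-1}. First I would recall that when $K$ is hyperspecial, $\mathcal H_K$ is commutative (Satake), so by Lemma \ref{prelim-01} the maximal left ideal $I_{V,K}$ is uniquely determined by the one--dimensional $\mathcal H_K$--module $V^K$; concretely it is the kernel of the algebra homomorphism $\lambda_\chi\colon \mathcal H_K \to \mathbb C$ giving the action of $\mathcal H_K$ on $V^K$, and this homomorphism is the one attached to the unramified character $\chi$ of $M_0$ (equivalently to the Satake parameter of $\chi$). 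The key general facts I will invoke from Theorem \ref{intr-m-1} are: $\mathcal W(I_{V,K},K)$ is admissible of finite length, all its irreducible subquotients share the infinitesimal character of $V$ (which is the infinitesimal character $[M_0,\chi]$ in the Bernstein sense), it has a unique maximal proper submodule with quotient $V$, and $\mathcal W(I_{V,K},K)^K \cong \mathcal H_K/I_{V,K}$ is one--dimensional.

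Next I would produce a nonzero $G$--map $\mathcal W(I_{V,K},K) \to \Ind_{P_0}^G(\chi)$. The induced representation $\Ind_{P_0}^G(\chi)$ is $K$--spherical with a one--dimensional space of $K$--fixed vectors (by the Iwasawa decomposition $G = P_0 K$), and $\mathcal H_K$ acts on that line by the same character $\lambda_\chi$ — this is precisely the Satake/Macdonald computation of the action of the spherical Hecke algebra on the normalized spherical vector. Hence there is a unique $\mathcal H_K$--module map $\mathcal H_K/I_{V,K} \to (\Ind_{P_0}^G(\chi))^K$ sending $\epsilon_K + I_{V,K}$ to the spherical vector $f_0$. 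Since $\mathcal W(I_{V,K},K)$ is generated over $G$ by its $K$--fixed line (it is a quotient of $\mathcal H \star \epsilon_K$, which is generated by $\epsilon_K$), Frobenius reciprocity / the universal property of $\mathcal W$ from Lemma \ref{prelim-1} upgrades this to a $G$--equivariant map $\Phi\colon \mathcal W(I_{V,K},K) \to \Ind_{P_0}^G(\chi)$ with $\Phi(\epsilon_K + \mathcal H \star I_{V,K}) = f_0 \neq 0$; in particular $\Phi$ is nonzero and, since $V$ is the unique irreducible quotient of $\Ind_{P_0}^G(\chi)$ and also the irreducible quotient of $\mathcal W(I_{V,K},K)$, $\Phi$ is compatible with the projections onto $V$.

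To finish, I must show $\Phi$ is an isomorphism. For injectivity: $\Ind_{P_0}^G(\chi)$ has finite length with a unique irreducible quotient $V$; if $\ker\Phi \neq 0$, it would be a nonzero submodule of $\mathcal W(I_{V,K},K)$, hence (by the structure in Theorem \ref{intr-m-1}, all of whose subquotients have infinitesimal character $[M_0,\chi]$ and whose only irreducible quotient is $V$) it would have to lie inside the unique maximal proper submodule; but then $\mathrm{im}\,\Phi$ would be a proper quotient of $\mathcal W(I_{V,K},K)$ mapping onto $V$ — I would rule this out by a dimension count on $K$--fixed vectors or, more robustly, by showing $\mathrm{im}\,\Phi$ has the same semisimplification as the target using the infinitesimal character and the fact that $\dim(\Ind_{P_0}^G(\chi))^K = 1 = \dim \mathcal W(I_{V,K},K)^K$, which forces $\ker\Phi$ to contain no $K$--fixed vector and hence (the target is generated by $f_0$, so $\mathrm{im}\,\Phi$ is all of it, and then comparing lengths via the shared infinitesimal character) $\ker\Phi = 0$. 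For surjectivity: $\mathrm{im}\,\Phi$ contains $f_0$, and $\Ind_{P_0}^G(\chi)$ is generated as a $G$--module by $f_0$ (again Iwasawa decomposition, or: any nonzero submodule of $\Ind_{P_0}^G(\chi)$ meets the $K$--fixed line since every irreducible subquotient here is spherical — this uses that the block is the "spherical" one), so $\Phi$ is onto. Therefore $\Phi$ is an isomorphism.

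\emph{Main obstacle.} The delicate point is the injectivity/length--matching step: I need that \emph{every} irreducible subquotient of $\mathcal W(I_{V,K},K)$ is $K$--spherical, so that the one--dimensionality of the $K$--fixed space pins down the length and forces $\ker\Phi$ to vanish. This should follow from Theorem \ref{intr-m-1} (shared infinitesimal character $[M_0,\chi]$) combined with the fact that in the Bernstein block attached to $(M_0,\chi)$ — the block containing the unramified principal series — the only irreducible objects with nonzero $K$--fixed vectors are the constituents of $\Ind_{P_0}^G(\chi)$, together with the classical equality of lengths and multiplicities of $\mathcal W$ and $\Ind_{P_0}^G(\chi)$ that one reads off from the $K$--fixed vectors being $\mathcal H_K$--modules of equal dimension $1$. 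Making this comparison airtight — rather than merely asserting $\Phi$ is injective because both sides "look the same" — is where the real work lies, and I would lean on the Bezrukavnikov/Bernstein--type results cited in the paper (\cite{Be}, \cite{bdk}) controlling submodules via $K$--invariants in this spherical situation.
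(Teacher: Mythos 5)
Your first step is the same as the paper's: since $\Ind_{P_0}^G(\chi)^K$ is one--dimensional, carries the same $\mathcal H_K$--character $\lambda_\chi$ as $V^K$, and generates $\Ind_{P_0}^G(\chi)$ by Iwasawa $G=P_0K$, the universal property from Lemma~\ref{prelim-1} gives a $G$--equivariant surjection $\Phi\colon\mathcal W(I_{V,K},K)\twoheadrightarrow\Ind_{P_0}^G(\chi)$. You also correctly identify that the entire difficulty is to show $\ker\Phi=0$.

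However, the strategy you propose for that step would not succeed. The claim you flag as the ``main obstacle'' --- that \emph{every} irreducible subquotient of $\mathcal W(I_{V,K},K)$ should be $K$--spherical --- is false: if the result is true, then $\mathcal W(I_{V,K},K)\cong\Ind_{P_0}^G(\chi)$, whose non--spherical constituents (e.g.\ the Steinberg representation when $G=GL_2$ and $V$ is trivial, as noted after the proposition) are subquotients of $\mathcal W$. Even the weaker statement that every nonzero $G$--submodule of $\mathcal W(I_{V,K},K)$ has a nonzero $K$--fixed vector is exactly what fails for hyperspecial $K$: the paper explicitly points out that $K$ does not have an Iwahori factorization, so the Bernstein submodule--detection result (\cite{Be}, \cite{bushnell}) that underlies Proposition~\ref{intr-m-0} is \emph{not} available here, and there is no ``Bezrukavnikov/Bernstein--type'' substitute for the spherical situation that you can lean on. Consequently the dimension count on $K$--fixed vectors ($1=1$) gives no control on $\ker\Phi$: a kernel consisting entirely of non--spherical pieces is consistent with everything you invoke, and your argument never rules it out.

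The paper closes the gap by a genuinely different device: the explicit Jacquet module computation of Proposition~\ref{cjm-8}. Using the constant--term description of Jacquet modules (Lemma~\ref{cjm-1}), the Satake isomorphism, and an elementary commutative--algebra lemma on $\mathcal A/\mathfrak m\mathcal A$ (Lemma~\ref{cjm-9}), one shows $r_{M_0,G}\bigl(\mathcal W(I_{V,K},K)\bigr)\simeq\bigoplus_{w\in W}w(\chi)\simeq r_{M_0,G}\bigl(\Ind_{P_0}^G(\chi)\bigr)$ (here $\chi$ is $W$--regular because of the strict positivity in the Langlands data on $P_0$, which is what Proposition~\ref{cjm-8} requires). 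Since the Jacquet functor $r_{M_0,G}$ is exact, and since every irreducible subquotient of $\ker\Phi$ has infinitesimal character $[M_0,\chi]$ (Theorem~\ref{intr-m-1}) and therefore a nonzero $P_0$--Jacquet module, a nonzero $\ker\Phi$ would strictly enlarge $r_{M_0,G}(\mathcal W)$ beyond $\bigoplus_{w}w(\chi)$; hence $\ker\Phi=0$. This Jacquet--module comparison is the missing idea in your proposal, and it is precisely the kind of invariant that ``sees'' non--spherical constituents, which $K$--fixed vectors cannot.
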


\begin{proof} Since $\Ind_{P_0}^G(\chi)^K \simeq V^K$ as a $\mathcal H_K$--module  generates whole $\Ind_{P_0}^G(\chi)$, we see that $\Ind_{P_0}^G(\chi)$ is a quotient of
  $\mathcal W(I_{V, K}, K)$. Next, by  Theorem \ref{intr-m-1}, $\mathcal W(I_{V, K}, K)$ has finite length, all of its irreducible subqoutients are among those of
  $\Ind_{P_0}^G(\chi)$.     Now, the explicit computation of the Jacquet module of $\mathcal W(I_{V, K}, K)$ with respect to $P_0$  (see Proposition \ref{cjm-8}) completes the proof. 
  \end{proof}

\vskip .2in
The induced representation $\Ind_{P_0}^G(\chi)$ can have a large number of irreducible subquotients. For example, one can see that from the Zelevinsky classification for $GL(n, k)$ (see \cite{Z}).
When $G=GL(2, k)$ and $V$ is the trivial representation, the induced representation    $\Ind_{P_0}^G(\chi)$ has length two; Steinberg representation of $GL(2, k)$ is a unique  subrepresentation.

\vskip .2in

I would like to thank Gordan Savin for  explaining to me the proof of Proposition \ref{intr-m-5} when $G$ is split and adjoint using methods of Iwahori Hecke algebras and
Kazhdan--Lusztig classification (see  \cite{KL}). I would like to thank the referee for useful  comments on the content of the paper.

\section{Elementary Properties of Hecke Algebras and Ideals} \label{prelim}

Let $k$ be a non--Archimedean local field. Let $\mathcal O\subset k$ be its ring of integers, and
  let $\varpi$ be  a generator of the maximal ideal in $\calO$. Let $q$ be the number of elements in the residue field $\mathcal O/ \varpi\mathcal O$.
  Assume that is $G$  is a   Zariski connected reductive group defined over $k$. By abuse of notation, we write $G$ for the group of $k$--points. Similarly we do for subgroups defined
  over $k$. We fix the Haar measure $dy$ on $G$ and let $\mathcal H=C_c^\infty(G)$  considered as an associative algebra under the convolution:
  $$
f\star g(x)= \int_G f(xy^{-1}) g(y) dy.
$$
For any open compact subgroup $L\subset G$, we let  $\mathcal H_L$ be the subalgebra of $\mathcal H$ consisting of all $L$--biinvariant functions in  $\mathcal H$. This is an associative algebra
with identity
\begin{equation}\label{prelim-00}
\epsilon_L\overset{def}{=}\frac{1}{vol(L)}1_L,
\end{equation}
where $1_L$ is the characteristic function of $L$ in $G$. We have
$$
\mathcal H=\cup_L \mathcal H_L,
$$
where $L$ ranges over all open compact subgroups of $G$ (or just over a basis of neighborhoods of $1$). By a standard definition (\cite{BZ}, 2.5), $\mathcal H$--module
$V$ is non--degenerate if  for any $v\in V$ there exists an
open compact subgroup $L\subset G$ such that  $\epsilon_L.v=v$. The space $\epsilon_L.V$ is an unital module for  $\mathcal H$. A non--degenerate $\mathcal H$--module gives rise to a
unique smooth $G$--module such that
$$
x.\left(f.v\right)\overset{def}{=}\left(l_xf\right).v, \ \ f\in \mathcal H(G), \ \ v\in V,
$$
where $l_x$ is the left translation $l_xf(y)=f(x^{-1}y)$.  We have
$$
V^L=\epsilon_L.V,
$$
for all open--compact subgroups $L\subset G$.

\vskip .2in
The category of all smooth  $G$--modules can be identified with the category of all non--degenerate  $\mathcal H$--modules. 
In particular,  an  $\mathcal H$--module is irreducible if and only if its is irreducible  smooth $G$--module.
By a theorem of Jacquet (\cite{renard}, Theorem VI.2.2), every irreducible smooth
$G$--module  $V$  is admissible i.e., the space $V^L$ is a finite dimensional complex vector space for all open--compact subgroups of $G$. For irreducible $V$, if $V^L\neq 0$, then it is an
irreducible $\mathcal H_L$--module (see \cite{BZ}, Proposition 2.10). Moreover, by the same reference, let  $L\subset G$ be an open--compact subgroup, and
assume that $V_i$, $i=1,2$, are irreducible   smooth $G$--modules such that  $V^L_i\neq 0$, $i=1, 2$. Then, $V_1$ is equivalent to  $V_2$ as  a $G$--module if and only if  $V^L_1$ is
equivalent to
$V^L_2$ as  a $\mathcal H_L$--module.

\vskip .2in
Assume that $W$ is an irreducible  unital $\mathcal H_L$--module. 
Let $w\in W$ be a non--zero vector in $W$.  Then
$$
I\overset{def}{=}I_w\overset{def}{=}\Ann_{\mathcal H_L}(w)
$$
is a maximal left ideal in $\mathcal H_L$. Obviously, we have
$$
\mathcal H_L /I\simeq W
$$
as $\mathcal H_L$--modules. Different choice of a vector $w'$ result in a  existence of $f, g\in \mathcal H_L$ such that  $w'=f.w$, and $w=g.w'$. Put  $I'=I'_{w'}$. Then, we have the
following:
$$ 
\begin{cases}\label{prelim-0}
  I\star g\subset I', \ \ I'\star f\subset I, \ \text{and}\\
\epsilon_L- g\star  f \equiv \ 0 \left(\text{mod} \ I\right), \ \  \epsilon_L- f\star  g \equiv \ 0 \left(\text{mod} \ I'\right).
\end{cases}
$$
\vskip .2in
 We have following lemma:

 \begin{Lem}\label{prelim-01} Let $L\subset G$ be an open compact subgroup. We define that two maximal  left ideals  $I$ and $I'$ in $\mathcal H_L$ are equivalent if and only if
   there exist
  $f, g\in \mathcal H_L$ such that (\ref{prelim-0}) holds. Then, maximal  left ideals  $I$ and $I'$ are equivalent if and only if the corresponding $\mathcal H_L$--modules are
  isomorphic:
  $$
  \mathcal H_L/ I\simeq \mathcal H_L/ I'.
  $$
  Furthermore, the classes of equivalence of irreducible unital $\mathcal H_L$--modules are parametrized by the equivalence classes of maximal left ideals in $\mathcal H_L$
  \end{Lem}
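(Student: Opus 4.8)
The plan is to prove the two assertions of Lemma~\ref{prelim-01} in turn: first the equivalence of (\ref{prelim-0}) with the isomorphism $\mathcal H_L/I \simeq \mathcal H_L/I'$, and then the parametrization statement, which is a formal consequence together with the discussion preceding the lemma.

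For the first direction, suppose $f,g\in\mathcal H_L$ satisfy (\ref{prelim-0}). I would define $\varphi\colon \mathcal H_L/I \to \mathcal H_L/I'$ by $h + I \mapsto h\star g + I'$. This is well defined precisely because $I\star g\subset I'$, and it is visibly a map of left $\mathcal H_L$-modules. Symmetrically, $\psi\colon \mathcal H_L/I'\to\mathcal H_L/I$, $h+I'\mapsto h\star f + I$, is well defined since $I'\star f\subset I$. Then $\psi\circ\varphi$ sends $h+I$ to $h\star g\star f + I$; using $\epsilon_L - g\star f \equiv 0\ (\mathrm{mod}\ I)$ and left-$\mathcal H_L$-linearity (so $h\star(g\star f) \equiv h\star\epsilon_L = h\ (\mathrm{mod}\ I)$, noting $\mathcal H_L/I$ is unital), we get $\psi\circ\varphi = \mathrm{id}$; and likewise $\varphi\circ\psi=\mathrm{id}$ from the other congruence. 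Hence $\varphi$ is an isomorphism. Conversely, suppose $\Phi\colon \mathcal H_L/I \xrightarrow{\sim} \mathcal H_L/I'$ is an $\mathcal H_L$-module isomorphism. Let $w = \epsilon_L + I \in \mathcal H_L/I$ and $w' = \epsilon_L + I' \in \mathcal H_L/I'$ be the canonical cyclic generators; both are nonzero since the ideals are proper. Then $\Phi(w)$ is a nonzero vector of $\mathcal H_L/I'$, so $\Phi(w) = f.w'$ for some $f\in\mathcal H_L$ (as $w'$ generates), and similarly $\Phi^{-1}(w') = g.w$ for some $g\in\mathcal H_L$. One checks $I = \Ann_{\mathcal H_L}(w)$ and $I' = \Ann_{\mathcal H_L}(w')$, and that $f,g$ so chosen satisfy exactly the relations (\ref{prelim-0}): indeed $h\star g \in I'$ iff $(h\star g).w' $... wait, more directly, $I\star g \subset I'$ because for $a\in I$, $a.(g.w) = a.\Phi^{-1}(w') = \Phi^{-1}(a.w') $, and $a.w=0$ forces $\Phi^{-1}(a.w')=0$, hence $a\star g \in \Ann(w') = I'$; symmetrically $I'\star f\subset I$; and $\Phi(\Phi^{-1}(w')) = w'$ gives $f\star g \equiv \epsilon_L\ (\mathrm{mod}\ I')$ by evaluating the $\mathcal H_L$-action on $w'$ and using faithfulness of the action of $\mathcal H_L/I'$ on its cyclic module modulo the annihilator, and symmetrically for the other congruence. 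This is essentially the computation already displayed in the paragraph before the lemma, read backwards.

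For the parametrization statement: the discussion preceding the lemma shows every irreducible unital $\mathcal H_L$-module $W$ arises as $\mathcal H_L/I_w$ for $I_w = \Ann_{\mathcal H_L}(w)$ a maximal left ideal ($w\in W$ nonzero), so the assignment $I \mapsto \mathcal H_L/I$ is surjective onto isomorphism classes of such modules. Conversely, $\mathcal H_L/I$ is irreducible unital for any maximal left ideal $I$ (irreducible by maximality; unital since $\epsilon_L$ is the identity of $\mathcal H_L$). By the first part, two maximal left ideals give isomorphic modules iff they are equivalent, so the induced map from equivalence classes of maximal left ideals to isomorphism classes of irreducible unital $\mathcal H_L$-modules is a well-defined bijection.

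I expect the main obstacle to be purely bookkeeping: the "$\Rightarrow$" direction of the first part — extracting $f$ and $g$ from an abstract isomorphism and verifying that they satisfy all four relations in (\ref{prelim-0}) — requires care in tracking which cyclic generator each element acts on and in using that the action of $\mathcal H_L/I$ on itself is "faithful enough" (i.e., $a\star\epsilon_L \equiv a$ and $a.w = 0 \iff a\in I$) to convert module-level identities into congruences of algebra elements. None of this is deep; it is the same manipulation as in the displayed computation for $w,w'$ above, and no new idea beyond unwinding definitions is needed.
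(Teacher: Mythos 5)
The paper itself gives no proof of this lemma (it says ``We leave details to the reader''), so there is nothing to compare approaches against; what you have written is the natural argument, and both directions of the first claim as well as the parametrization statement are conceptually correct. The forward direction (constructing mutually inverse module maps by right multiplication by $g$ and $f$) is clean: well-definedness comes from $I\star g\subset I'$ and $I'\star f\subset I$, left-$\mathcal H_L$-linearity is automatic because you use right multiplication, and $\psi\circ\varphi=\mathrm{id}$ follows from $\epsilon_L-g\star f\in I$ together with the fact that $I$ is a \emph{left} ideal (so $h\star(\epsilon_L-g\star f)\in I$).

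The reverse direction has a bookkeeping bug. With your conventions $\Phi(w)=f.w'$ and $\Phi^{-1}(w')=g.w$, the relations that actually hold are $I\star f\subset I'$, $I'\star g\subset I$, $\epsilon_L-f\star g\in I$, $\epsilon_L-g\star f\in I'$ --- i.e.\ (\ref{prelim-0}) with $f$ and $g$ exchanged. Your displayed justification of ``$I\star g\subset I'$'' is incorrect as written: you compute $(a\star g).w=a.\Phi^{-1}(w')=\Phi^{-1}(a.w')$ and assert that $a.w=0$ forces $\Phi^{-1}(a.w')=0$; but $a.w=0$ gives no information about $a.w'$, and in any case $(a\star g).w=0$ would show $a\star g\in I$, not $a\star g\in I'$. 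The correct version of that line is: for $a\in I'$, $(a\star g).w=\Phi^{-1}(a.w')=\Phi^{-1}(0)=0$, so $a\star g\in\Ann(w)=I$, which is $I'\star g\subset I$. Analogously, for $a\in I$, $(a\star f).w'=\Phi(a.w)=0$ gives $I\star f\subset I'$; and applying $\Phi^{-1}$ to $\Phi(w)=f.w'$ gives $w=(f\star g).w$, i.e.\ $\epsilon_L-f\star g\in I$, with the symmetric identity giving $\epsilon_L-g\star f\in I'$. After swapping the names $f\leftrightarrow g$ this is exactly (\ref{prelim-0}), and it is indeed the paper's own displayed computation read in the other direction. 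None of this affects the validity of your overall argument or of the parametrization paragraph, which is fine as stated, but the specific sentence I quoted would not survive a careful read.
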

\begin{proof} We leave details to the reader.
  \end{proof}

\vskip .2in
We continue with the following simple result (see \cite{muic}, Theorem 3-9) which makes  (\cite{BZ}, Proposition 2.10 c)) more explicit.

\begin{Lem}\label{prelim-1} 
  Let $L\subset G$ be an open--compact subgroup. Then, for each maximal  left ideal  $I\subset \mathcal H_L$,
  there exists a unique  left ideal $J'$ of $\mathcal H$ such that the following three conditions hold:
\begin{itemize}
\item[(i)] $J'\subset  \mathcal H\star \epsilon_L$;
\item [(ii)]  $I\subset J'$, or equivalently $\mathcal H \star I\subset J$;
\item[(iii)] $\mathcal H\star \epsilon_L/J'$ is irreducible.
\end{itemize}
The left ideal $J'$ is a unique maximal left--ideal, denoted by
$$
J_I=J_{I, L},
$$
in  $\mathcal H\star \epsilon_L$
which contains $I$. It is a sum of all proper left ideals
in $\mathcal H\star \epsilon_L$ which contain $I$. Moreover, $\epsilon_L\star J_{I, L}= I$. 
\begin{itemize}
\item[(iv)] Regarding
$$
  \mathcal V(I, L)\overset{def}{=} \mathcal H \star \epsilon_L/J_{I, L}
  $$
as a smooth $G$--module (see above), we have that its space of $L$--invariants  is isomorphic to (the irreducible module)  $\mathcal H_L/I$ as a
   $\mathcal H_L$--module. Up to isomorphism,  $\mathcal V(I, L)$ is  unique irreducible smooth $G$--module with this property (see \cite{BZ}, Proposition 2.10). 
\item[(v)] The smooth $G$--module
  $$
  \mathcal W(I, L)\overset{def}{=} \mathcal H \star \epsilon_L/\mathcal H\star I
  $$
  has a unique maximal proper subrepresentation, and the corresponding quotient is
  $\mathcal V(I, L)$. The canonical projection $\mathcal W(I, L)^L\longrightarrow \mathcal V(I, L)^L$ is an isomorphism of $\mathcal H_L$--modules.
  The module  $\mathcal W(I, L)$ is generated by $\epsilon_L+\mathcal H\star I$. 
\end{itemize}
\end{Lem}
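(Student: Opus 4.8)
The plan is to argue entirely on the $\mathcal{H}$-module side, the key tools being that $V\mapsto V^{L}=\epsilon_{L}.V$ is an exact functor from smooth $G$-modules to $\mathcal{H}_{L}$-modules which moreover commutes with arbitrary sums of submodules (contained in \cite{BZ}, 2.10), together with the idempotent identities $\epsilon_{L}\star\mathcal{H}\star\epsilon_{L}=\mathcal{H}_{L}$ and $\epsilon_{L}\star\mathcal{H}\star I=I$. The second identity is elementary: $I=\epsilon_{L}\star I\subseteq\epsilon_{L}\star\mathcal{H}\star I$, while for $f\in\mathcal{H}$ and $i\in I$ one has $\epsilon_{L}\star f\star i=(\epsilon_{L}\star f\star\epsilon_{L})\star(\epsilon_{L}\star i\star\epsilon_{L})\in\mathcal{H}_{L}\star i\subseteq I$ since $i=\epsilon_{L}\star i\star\epsilon_{L}\in\mathcal{H}_{L}$ and $I$ is a left ideal of $\mathcal{H}_{L}$. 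In particular $\mathcal{H}\star I\subseteq\mathcal{H}\star\epsilon_{L}$, so $\mathcal{W}(I,L)=\mathcal{H}\star\epsilon_{L}/\mathcal{H}\star I$ is defined; since $\mathcal{H}\star\epsilon_{L}$ is generated over $\mathcal{H}$ by $\epsilon_{L}$, the module $\mathcal{W}(I,L)$ is generated by $\epsilon_{L}+\mathcal{H}\star I$, which lies in $\mathcal{W}(I,L)^{L}$; and applying $\epsilon_{L}.(-)$ to $0\to\mathcal{H}\star I\to\mathcal{H}\star\epsilon_{L}\to\mathcal{W}(I,L)\to 0$ gives $\mathcal{W}(I,L)^{L}\simeq\mathcal{H}_{L}/I$, which is irreducible as an $\mathcal{H}_{L}$-module because $I$ is a maximal left ideal.

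Next I would isolate the following general fact: if $V$ is a smooth $G$-module generated by $V^{L}$ with $V^{L}$ irreducible over $\mathcal{H}_{L}$, then the sum $N_{0}$ of all proper subrepresentations of $V$ is again proper, it is the unique maximal proper subrepresentation, and $V/N_{0}$ is irreducible with $(V/N_{0})^{L}$ carried isomorphically onto $V^{L}$ by the projection. Indeed, a proper subrepresentation $N\subsetneq V$ cannot satisfy $N^{L}=V^{L}$, for then $N\supseteq\mathcal{H}.V^{L}=V$; hence $N^{L}=0$ by irreducibility of $V^{L}$, and since $\epsilon_{L}.(-)$ commutes with sums, $N_{0}^{L}=0$, so $N_{0}\subsetneq V$ while it contains every proper subrepresentation. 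Exactness of $(-)^{L}$ on $0\to N_{0}\to V\to V/N_{0}\to 0$ identifies $(V/N_{0})^{L}$ with $V^{L}$; running the same argument for $V/N_{0}$, which is again generated by its irreducible $L$-invariants, shows it has no nonzero proper subrepresentation, i.e. it is irreducible.

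I would then apply this to $V=\mathcal{W}(I,L)$ and transport the conclusions back along the inclusion-preserving bijection between $\mathcal{H}$-submodules of $\mathcal{W}(I,L)$ and left ideals of $\mathcal{H}$ between $\mathcal{H}\star I$ and $\mathcal{H}\star\epsilon_{L}$; observe that such a left ideal contains $I$ if and only if it contains $\mathcal{H}\star I$. Let $J_{I,L}\subseteq\mathcal{H}\star\epsilon_{L}$ correspond to the maximal proper subrepresentation $N_{0}$ of $\mathcal{W}(I,L)$. Then $J_{I,L}$ is the unique maximal proper left ideal of $\mathcal{H}\star\epsilon_{L}$ containing $I$, and, being the preimage of the sum of all proper subrepresentations of $\mathcal{W}(I,L)$, it equals the sum of all proper left ideals of $\mathcal{H}\star\epsilon_{L}$ containing $I$. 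Applying $\epsilon_{L}.(-)$ to $0\to\mathcal{H}\star I\to J_{I,L}\to N_{0}\to 0$ and using $N_{0}^{L}=0$ yields $\epsilon_{L}\star J_{I,L}=\epsilon_{L}\star\mathcal{H}\star I=I$. For the uniqueness of $J'$: conditions (i) and (ii) make $J'$ a left ideal of $\mathcal{H}$ inside $\mathcal{H}\star\epsilon_{L}$ containing $I$, necessarily proper because (iii) demands a nonzero irreducible quotient; so $J'\subseteq J_{I,L}$, and if the inclusion were strict then $\mathcal{H}\star\epsilon_{L}/J'$ would surject onto $\mathcal{V}(I,L)=\mathcal{H}\star\epsilon_{L}/J_{I,L}$ with nonzero kernel $J_{I,L}/J'$, contradicting (iii). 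Parts (iv) and (v) are then immediate: $\mathcal{V}(I,L)=\mathcal{W}(I,L)/N_{0}$ is irreducible with $\mathcal{V}(I,L)^{L}\simeq\mathcal{W}(I,L)^{L}\simeq\mathcal{H}_{L}/I$, uniqueness of an irreducible smooth $G$-module with this space of $L$-invariants is \cite{BZ}, Proposition 2.10, and (v) records for $\mathcal{W}(I,L)$ exactly what the second step established.

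I do not expect a genuinely hard step here: the whole argument is organized bookkeeping on top of exactness of $V\mapsto\epsilon_{L}.V$, its compatibility with sums, the two idempotent identities, and \cite{BZ}, Proposition 2.10 (and it reproduces \cite{muic}, Theorem 3-9). The one point that requires a moment of care is the dictionary between left ideals of $\mathcal{H}$ contained in $\mathcal{H}\star\epsilon_{L}$ and subrepresentations of $\mathcal{W}(I,L)$, in particular reading ``maximal left ideal in $\mathcal{H}\star\epsilon_{L}$'' as ``maximal proper $\mathcal{H}$-submodule of $\mathcal{H}\star\epsilon_{L}$'' — legitimate because these are precisely the left ideals of $\mathcal{H}$ sitting inside $\mathcal{H}\star\epsilon_{L}$ — after which the equivalence of (i)--(iii) with the description of $J_{I,L}$ becomes transparent.
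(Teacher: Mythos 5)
The paper does not actually prove this lemma: it is quoted from \cite{muic}, Theorem 3-9, and no in-paper argument is given, so there is nothing to compare against directly. Your proof is correct and is the natural one: it runs entirely on the exactness of $V\mapsto\epsilon_L.V$ and its compatibility with sums of submodules, the two idempotent identities $\epsilon_L\star\mathcal H\star\epsilon_L=\mathcal H_L$ and $\epsilon_L\star\mathcal H\star I=I$, the inclusion-preserving correspondence between $\mathcal H$-submodules of $\mathcal W(I,L)$ and left ideals of $\mathcal H$ squeezed between $\mathcal H\star I$ and $\mathcal H\star\epsilon_L$, and \cite{BZ}, Proposition 2.10 for the uniqueness clause in (iv). All the identities you use are verified correctly, and the reduction of (i)--(iii) to the statement about the unique maximal proper subrepresentation is clean.

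The only place I would tighten the wording is the step asserting that $V/N_0$ is irreducible. As written, ``running the same argument for $V/N_0$'' only produces a unique maximal proper subrepresentation $N_1\subset V/N_0$ with $N_1^L=0$; by itself this does not say $N_1=0$. The gap is easy to close: either pull $N_1$ back along $V\to V/N_0$ to a subrepresentation of $V$ whose $L$-invariants vanish by exactness, hence is proper, hence lies in $N_0$, forcing $N_1=0$; or, more directly, invoke the correspondence theorem to note that a proper subrepresentation of $V/N_0$ corresponds to a proper subrepresentation of $V$ containing $N_0$, which must equal $N_0$ since $N_0$ is the sum of all proper subrepresentations. Either way the conclusion stands, so this is a matter of precision rather than a genuine gap.
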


\section{Bernstein center}\label{bc}

In this section we describe Bernstein center and its action on smooth representations of $G$. We follow  (\cite{bdk}, Section 2).  We continue with assumption from the first paragraph of the introduction.

We fix a minimal parabolic subgroup $P_0$, its Levi decomposition  $P_0=M_0U_0$, and, as usual related to these choices,  we fix a set of 
standard parabolic subgroups $P=MU$, where $U$ is the unipotent radical, and $M$ is the Levi subgroup such that $M_0\subset M$, $P=MP_0$. We call $M$ a standard Levi subgroup.
(See the text immediately  before the statement of Proposition \ref{intr-m-5}.)

Following   (\cite{bdk}, 2.1), we call the pair $(M, \rho)$, where $M$ is a standard Levi subgroup and $\rho$ an irreducible supercuspidal representation of $M$, a cuspidal pair  of
$G$. Let $\Theta(G)$
be the set of all cuspidal pairs up to a conjugation by $G$.  We write $[M, \rho]$  for the $G$--orbit in $\Theta(G)$ of $(M, \rho)$. A point in $\Theta(G)$ is called infinitesimal
character of $G$.
If we write $\theta\in \Theta(G)$ in the form $\theta=[M, \rho]$, then we say that infinitesimal character $\theta$ is determined by the cuspidal pair $(M, \rho)$.

Let $\Psi(M)$  be the group of all unramified characters if $M$. It has a natural structure of a complex algebraic torus.
A connected component $\Theta$ in $\Theta(G)$ determined by the cuspidal pair $(M, \rho)$ is the image of the map $\Psi(M)\longrightarrow \Theta(G)$ given by
$\psi\longmapsto [M, \psi \rho]$. The set $\Theta$ has a natural structure of a complex affine variety given by quotient of $\Psi(M)$ by a finite group.
We denote by  $\mathcal Z (\Theta)$ its algebra of regular functions.

We have 
$$
\Theta(G)=\cup_\Theta \Theta \ \ \text{(disjoint union}).
$$

By standard theory, given  $\theta\in \Theta(G)$, all parabolically induced representations $\Ind_{P}^G(\rho)$ (normalized induction) have the same
semi--simplifications in the Grothendieck group of all
finite length smooth $G$--modules when cuspidal pairs $(M, \rho)$ range over $G$--orbit $\theta$, and $P=MP_0$. Different points in $\Theta(G)$ determine disjoint
sets of irreducible $G$--modules (after semi--simplification).

Let $\sigma$ be an irreducible smooth $G$--module. Then, by remarks in the previous paragraph, there exists a unique $\theta\in \Theta(G)$  such that if we write 
$\theta=[M, \rho]$, then $\sigma$ is an irreducible subquotient of $\Ind_{P}^G(\rho)$.  We call $\theta$ the infinitesimal character of $\sigma$, and write $\theta=inf. char. (\sigma)$.  

Let $\Irr(G)$ be the set of equivalence classes of smooth irreducible $G$--modules.
Then, the map $\Irr(G)\longrightarrow \Theta(G)$, $\sigma\longmapsto inf. char. (\sigma)$ is finite to one; a preimage
of $\theta\in \Theta(G)$ is the set of all irreducible suqbquotients of  $\Ind_{P}^G(\rho)$ where $\theta=[M, \rho]$, and $P=MP_0$.

\vskip .2in

\begin{Def}\label{bc-1}
We say that the infinitesimal character $\theta\in \Theta(G)$ is in general position if $\Ind_{P}^G(\sigma)$ is irreducible, where $\theta=[M, \rho]$ and $P=MP_0$. 
\end{Def}

Of course, the definition is independent of the choice of the representative $(M, \rho)$.

\begin{Lem} \label{bc-2} Let $\Theta\subset \Theta(G)$ be a connected component.
  Then,  there exists Zariski open set $U\subset \Theta$ such that $\theta \in U$ is in general position. 
  \end{Lem}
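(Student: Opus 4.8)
The plan is to reduce the statement to a finiteness and algebraicity fact about Jacquet modules and reducibility points on the torus $\Psi(M)$. Fix the cuspidal pair $(M,\rho)$ determining $\Theta$, and recall that $\Theta$ is the quotient of $\Psi(M)$ by a finite group, so it suffices to produce a Zariski open subset of $\Psi(M)$, invariant under that finite group, on which $\psi\mapsto[M,\psi\rho]$ lands in general position; the image of such a set is then Zariski open in $\Theta$. Concretely, I want to show that the set of $\psi\in\Psi(M)$ for which $\Ind_P^G(\psi\rho)$ is reducible is contained in a proper Zariski closed subset of $\Psi(M)$.

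First I would invoke the standard structure theory: the family $\{\Ind_P^G(\psi\rho)\}_{\psi\in\Psi(M)}$ has constant length generically, and all its irreducible constituents have the same cardinality of Jordan--H\"older factors outside a Zariski closed set. The cleanest route is via intertwining operators. The (normalized) standard intertwining operator $A_w(\psi\rho)\colon \Ind_P^G(\psi\rho)\to \Ind_{P'}^G(w(\psi\rho))$, for $w$ running over the relevant Weyl group elements, depends rationally on $\psi$ (this is Harish-Chandra's theory of the $c$-functions, as in \cite{renard}); the reducibility of $\Ind_P^G(\psi\rho)$ is detected by the vanishing or pole behavior of these operators, equivalently by the vanishing of certain numerators/denominators of the $c$-functions, which are regular functions on $\Psi(M)$. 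Hence the reducibility locus is contained in a finite union of zero sets of non-zero regular functions on $\Psi(M)$. The one point that must be checked is that this union is \emph{proper}, i.e.\ that there is at least one $\psi$ with $\Ind_P^G(\psi\rho)$ irreducible — but this is classical: for $\psi$ in sufficiently general position (e.g.\ with $|\psi|$ deep in the positive Weyl chamber, so that the Langlands quotient theorem applies and no two constituents of the inducing data are linked), $\Ind_P^G(\psi\rho)$ is irreducible. Therefore the reducibility locus is a proper Zariski closed subset $Z\subset\Psi(M)$, it is stable under the finite group since the property "in general position'' is (by the remark after Definition \ref{bc-1}) independent of the representative, and $U\overset{def}{=}\Theta\setminus(\text{image of }Z)$ is the desired Zariski open set.

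The main obstacle is the second ingredient, namely producing \emph{a} point of irreducibility and organizing the reducibility locus as the zero set of genuinely non-zero regular functions: one must rule out the degenerate possibility that $\Ind_P^G(\psi\rho)$ is reducible for \emph{all} $\psi$. If one does not want to quote Harish-Chandra's rationality of intertwining operators, an alternative is to argue purely within the Bernstein-center framework of \cite{bdk}: the function $\psi\mapsto \ell(\Ind_P^G(\psi\rho))$ (length in the Grothendieck group) is lower semicontinuous in the Zariski topology because the dimensions of spaces of intertwining maps between the members of the family and a fixed finite set of irreducibles are upper semicontinuous, and its generic value is $1$ by the Langlands-quotient argument above. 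Either way, the heart of the matter is the same elementary fact — a constructible function on an irreducible variety that equals its minimum generically does so on a Zariski open set — combined with the classical existence of an irreducibility point, and the rest is bookkeeping about the finite quotient map $\Psi(M)\to\Theta$.
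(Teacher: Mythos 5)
Your proposal is correct and takes essentially the same route as the paper: reduce to $\Psi(M)$, observe that generic irreducibility of $\Ind_P^G(\psi\rho)$ (detected via Jacquet-module central characters and the rationality of intertwining operators, with the Langlands-quotient argument supplying at least one point of irreducibility) yields a proper Zariski-closed reducibility locus, then push the complement down the finite quotient map $\Psi(M)\to\Theta$. Two small remarks: the step ``the image of the open set is open'' really rests on the fact that a finite morphism of affine varieties is closed (the paper cites Shafarevich for exactly this, and your formulation via the complement of the image of the closed locus is what makes the argument go through), and in your alternative argument the length function $\psi\mapsto\ell(\Ind_P^G(\psi\rho))$ should be called \emph{upper} semicontinuous (it can only jump up on closed subsets), not lower; that is what makes $\{\psi:\ell(\psi)=1\}$ open.
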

\begin{proof} Assume that $\Theta$ is  the image of the map $\alpha: \Psi(M)\longrightarrow \Theta(G)$ given by
  $\alpha(\psi)= [M, \psi \rho]$.

  It is well--known that there exists Zariski open set $U'\subset \Psi(M)$ such that
  $\Ind_{P}^G(\psi \rho)$ is irreducible for $\psi\in U'$. For example, this set is obtained if we take all $\psi$ such that
  the normalized Jacquet module with respect to $P$ has different central characters, and the long--intertwining operator is regular and an isomorphism.
  That kind of standard and well--known considerations can be easily extracted from  \cite{muic-1}. The proof is also contained in (\cite{renard}, Th\' eorem\` e VI.8.5)

  As we recalled above, the set $\Theta$ has  natural structure of a complex affine variety given by quotient of $\Psi(M)$ by a finite group.
  This implies that canonical regular map $\alpha: \Psi(M)\longrightarrow\Theta$ is a finite regular map between affine alebraic varieties. In particular, the image of any closed set
  is closed (\cite{sha}, Chapter 5, Section 3, Corollary).  The required open set is
  $$
  U=\Theta \setminus \alpha\left(\Psi(M)-U'\right)\subset \alpha(U').
  $$
  \end{proof}

Following (\cite{bdk}, Section 2), we let 
$$
\mathcal Z(G)=\prod_\Theta \ \mathcal Z (\Theta).
$$
This $\mathbb C$--algebra can be interpreted as an algebra of regular functions on the affine variety $\Theta(G)$ with infinitely many connected components $\Theta$.
The ideal 
$$
\mathcal Z(G)^0 \overset{def}{=}\oplus_\Theta \ \mathcal Z(\Theta)
$$
is a proper  ideal in $\mathcal Z(G)$.  One can easily check that $\mathbb C$--algebra homomorphisms $\mathcal Z(G)\longrightarrow \mathbb C$ non--trivial on $\mathcal Z(G)^0$
are exactly evaluations at points in $\Theta(G)$.

\vskip .2in

We recall from (\cite{Be}, 2.13) the following result:

\begin{Thm} \label{bc-3} 
For each smooth $G$--module $V$ there exists a homomorphism $\mathcal Z(G)\longrightarrow End_G(V)$ of $\mathbb C$--algebras
such that the following holds: 

\begin{itemize}
\item[(C-1)] if $V$ is irreducible, then the action of $z\in \mathcal Z(G)$ is given by $z=inf. char. (V)(z)1_V$;
\item[(C-2)] we have $\Hom_G(V, \ V')\subset Hom_{\mathcal Z(G)}(V, \ V')$ for all  smooth $G$--modules $V$ and $V'$.
\end{itemize}
The properties (i) and (ii) determine the system of $\mathbb C$--algebra homormorphisms $\mathcal Z(G)\longrightarrow End_G(V)$, where $V$ ranges over smooth $G$--modules, uniquely.
\end{Thm}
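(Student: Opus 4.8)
The plan is to establish the action of $\mathcal{Z}(G)$ one Bernstein block at a time. Recall that the category $\mathcal{M}(G)$ of smooth $G$--modules decomposes as a product $\mathcal{M}(G)=\prod_\Theta \mathcal{M}_\Theta(G)$ over the connected components $\Theta\subset\Theta(G)$, where $\mathcal{M}_\Theta(G)$ is the full subcategory of modules all of whose irreducible subquotients have infinitesimal character in $\Theta$; since $\mathcal{Z}(G)=\prod_\Theta\mathcal{Z}(\Theta)$, it suffices to produce, for each $\Theta$, a functorial system of $\bbC$--algebra homomorphisms $\mathcal{Z}(\Theta)\longrightarrow\End_G(V)$, $V\in\mathcal{M}_\Theta(G)$, and then to take the product over all $\Theta$; properties (C-1) and (C-2) may likewise be checked blockwise, and uniqueness follows from uniqueness in each block.

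The key structural input I would invoke is the Bernstein--Deligne theory of the block: $\mathcal{M}_\Theta(G)$ admits a finitely generated projective generator $\Pi_\Theta$, so $V\mapsto\Hom_G(\Pi_\Theta,V)$ is an equivalence of $\mathcal{M}_\Theta(G)$ with the category of right modules over $A_\Theta:=\End_G(\Pi_\Theta)$; moreover $A_\Theta$ is module--finite over its center $Z(A_\Theta)$, and $Z(A_\Theta)$ is canonically identified, as a $\bbC$--algebra, with $\mathcal{Z}(\Theta)$, the (finitely generated, reduced) ring of regular functions on the affine variety $\Theta$. Granting this, $\mathcal{Z}(\Theta)=Z(A_\Theta)$ acts on each right $A_\Theta$--module $\Hom_G(\Pi_\Theta,V)$ by composition with central endomorphisms of $\Pi_\Theta$; this is functorial in $V$ by construction, and transporting it back through the equivalence yields the homomorphisms $\mathcal{Z}(\Theta)\to\End_G(V)$ and property (C-2). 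For (C-1), an irreducible $V$ in the block corresponds to a simple $A_\Theta$--module; because $A_\Theta$ is finite over the finitely generated $\bbC$--algebra $\mathcal{Z}(\Theta)$ and $\bbC$ is algebraically closed (and because Schur's lemma gives $\End_G(V)=\bbC$ for smooth irreducible $V$, as $\End_G(V)$ has countable dimension over the uncountable field $\bbC$), this simple module has a central character given by evaluation at a point $\theta_V\in\Theta$, and the normalization of the identification $Z(A_\Theta)\cong\mathcal{Z}(\Theta)$ --- set up via the algebraic family $\psi\mapsto\Ind_P^G(\psi\rho)$ on $\Psi(M)$, on which the center acts ``polynomially'' in $\psi$ --- is exactly such that $\theta_V$ is the infinitesimal character of $V$. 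Hence $z\in\mathcal{Z}(\Theta)$ acts on $V$ by the scalar $\mathrm{inf.char.}(V)(z)$.

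For the uniqueness assertion, suppose two systems $z\mapsto(z_V)$ and $z\mapsto(z'_V)$ both satisfy (C-1) and (C-2). For a fixed $z$, the difference $\delta=(z_V-z'_V)_V$ is a natural endomorphism of the identity functor on $\mathcal{M}(G)$ which, by (C-1), vanishes on every irreducible module. Fix a block $\Theta$: applying naturality to all elements of $\End_G(\Pi_\Theta)$ shows $\delta_{\Pi_\Theta}\in Z(A_\Theta)=\mathcal{Z}(\Theta)$, and for each point $\theta\in\Theta$ there is a simple $A_\Theta$--module with central character $\theta$ (Nakayama applied to $A_\Theta/\mathfrak{m}_\theta A_\Theta$), i.e.\ an irreducible $V\in\mathcal{M}_\Theta(G)$ on which $\delta_{\Pi_\Theta}$ acts by its value $\delta_{\Pi_\Theta}(\theta)$; this value is $0$ since $\delta$ kills irreducibles. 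As $\mathcal{Z}(\Theta)$ is reduced, a regular function vanishing at every point of $\Theta$ is $0$, so $\delta_{\Pi_\Theta}=0$; since every object of $\mathcal{M}_\Theta(G)$ is a quotient of a direct sum of copies of $\Pi_\Theta$, naturality forces $\delta_V=0$ throughout the block, and hence for all smooth $V$.

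The main obstacle is precisely the structural input: the existence of the projective generator $\Pi_\Theta$ of each block and, above all, the identification of $Z(\End_G(\Pi_\Theta))$ with the coordinate ring $\mathcal{Z}(\Theta)$ of $\Theta$. This is the substantive part of the Bernstein--Deligne theory and is where genuine harmonic analysis on $G$ enters (parabolic induction and Jacquet restriction, second adjointness, and control of the family $\Ind_P^G(\psi\rho)$). Everything else above is formal: the passage to modules over $A_\Theta$, the reducedness and finite generation of $\mathcal{Z}(\Theta)$, and Schur's lemma for smooth irreducibles over $\bbC$.
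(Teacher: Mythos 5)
The paper does not prove Theorem \ref{bc-3}; it is explicitly recalled from Bernstein's \emph{Le centre de Bernstein} (\cite{Be}, 2.13) and used as a black box. There is therefore no ``paper's own proof'' to compare against. Your sketch follows the standard Bernstein--Deligne architecture for establishing this result: decompose $\mathcal{M}(G)$ into blocks, realize each block as modules over $A_\Theta = \End_G(\Pi_\Theta)$ for a finitely generated projective generator $\Pi_\Theta$, identify $Z(A_\Theta)$ with the coordinate ring $\mathcal{Z}(\Theta)$, and let the center act by central endomorphisms. That is indeed how the theorem is proved in the cited source, and your uniqueness argument (naturality plus (C-1) forcing a regular function on $\Theta$ to vanish pointwise, hence identically by reducedness) is sound.

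You are candid that the substantive content --- existence of $\Pi_\Theta$, module-finiteness of $A_\Theta$ over its center, and the identification $Z(A_\Theta)\cong\mathcal{Z}(\Theta)$ with the correct normalization so that central characters of irreducibles match infinitesimal characters --- is assumed rather than proved, and that this is where the real harmonic analysis (second adjointness, control of the family $\Ind_P^G(\psi\rho)$) lives. That is exactly right: your argument is a faithful reduction of Theorem \ref{bc-3} to the core structural theorems of \cite{Be}, not an independent proof of them. As a paraphrase of the proof the paper is implicitly relying on, it is accurate; as a self-contained argument it is not, which you acknowledge. One small point worth making explicit: the category decomposition $\mathcal{M}(G)=\prod_\Theta\mathcal{M}_\Theta(G)$ that you invoke at the outset is itself equivalent to the Decomposition Theorem (Theorem \ref{bc-5} in the paper), which is also cited from \cite{Be} rather than proved, so you are not making the input list any shorter than Bernstein's own.
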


\vskip .2in
Next, we recall the following result (\cite{Be}, Proposition 3.3):

\begin{Thm} \label{bc-4}   
A  finitely generated smooth representation $V$  is $\mathcal Z(G)$--admissible i.e.,
  for each open--compact subgroup $L\subset G$ the space of 
  $L$--invariants $V^L$ is finitely generated $\mathcal Z(G)$--module.
\end{Thm}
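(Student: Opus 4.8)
The plan is to reduce the statement, by elementary manipulations with idempotents, to the single assertion that for every open compact subgroup $L\subset G$ the Hecke algebra $\mathcal H_L$ is a finitely generated module over the image of $\mathcal Z(G)$, and then to obtain that last fact from Bernstein's decomposition of the category of smooth representations into blocks.

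First the reduction. If $L'\subset L$ then $V^L=\epsilon_L\star V^{L'}$, and by (C-2) of Theorem \ref{bc-3} the idempotent $\epsilon_L$ acting on $V^{L'}$ commutes with $\mathcal Z(G)$; hence $V^L$ is a direct summand of $V^{L'}$ as a $\mathcal Z(G)$-module, so it is enough to prove the claim for arbitrarily small $L$. Now choose finitely many generators $v_1,\dots,v_n$ of $V$, an open compact $K_0$ fixing all of them, and shrink $L$ so that $L\subset K_0$. The surjection $(\mathcal H\star\epsilon_{K_0})^{\oplus n}\to V$, $(f_i)\mapsto\sum_i f_i.v_i$, is $G$-equivariant, hence $\mathcal Z(G)$-equivariant by (C-2); applying the exact functor $(-)^L=\epsilon_L\star(-)$ exhibits $V^L$ as a quotient of $(\epsilon_L\star\mathcal H\star\epsilon_{K_0})^{\oplus n}$, as an $\mathcal H_L$-module and a fortiori as a $\mathcal Z(G)$-module. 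Finally, since $L\subset K_0$ we have $\epsilon_L\star\epsilon_{K_0}=\epsilon_{K_0}=\epsilon_{K_0}\star\epsilon_L$, so right convolution by $\epsilon_{K_0}$ is an idempotent $G$-endomorphism of $\mathcal H\star\epsilon_L$ with image $\mathcal H\star\epsilon_{K_0}$; taking $L$-invariants, $\epsilon_L\star\mathcal H\star\epsilon_{K_0}$ is a direct summand of $\mathcal H_L$ as a left $\mathcal H_L$-module. Chaining these, $V^L$ is a $\mathcal Z(G)$-subquotient of a finite power of $\mathcal H_L$, so it suffices to prove that $\mathcal H_L$ is finitely generated as a $\mathcal Z(G)$-module.

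For this I would invoke Bernstein's structure theory. One has $\mathcal H_L=\prod_{\mathfrak s}(\mathcal H_L)_{\mathfrak s}$ over the connected components $\mathfrak s$ of $\Theta(G)$ (i.e.\ over inertial classes $[M,\rho]$), compatible with $\mathcal Z(G)=\prod_{\mathfrak s}\mathcal Z(\Theta_{\mathfrak s})$, and only finitely many factors $(\mathcal H_L)_{\mathfrak s}$ are non-zero, by the finiteness results recalled in the introduction (finitely many standard Levis, and finitely many supercuspidals of bounded depth on each, up to unramified twist). Fix such an $\mathfrak s=[M,\rho]$, put $\Theta=\Theta_{\mathfrak s}$, $R=\mathcal O(\Psi(M))$ (a Laurent‑polynomial ring, hence a finitely generated Noetherian $\mathbb C$-algebra) and $\mathcal Z(\Theta)=R^{W}$ for the relevant finite group $W$, so that $R$ is module-finite over $\mathcal Z(\Theta)$. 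Let $\widetilde\rho=\rho\otimes_{\mathbb C}R$ be the universal unramified family, a smooth $M$-module over $R$, and $\Pi=\Ind_P^G(\widetilde\rho)$ its normalised parabolic induction, a smooth $G$-module lying in the block $\mathcal R_{\mathfrak s}(G)$ and carrying an $R$-action by $G$-endomorphisms. The two inputs I would take from Bernstein's theory are: (b1) $\Pi$ is a projective generator of $\mathcal R_{\mathfrak s}(G)$ — projective because $\widetilde\rho$ is a projective generator of the cuspidal block of $M$ and, by second adjointness, $\Ind_P^G$ preserves projectives; a generator because every irreducible of $\mathcal R_{\mathfrak s}(G)$ has non-zero Jacquet module along $P$ and hence, by adjunction, admits a non-zero map to $\Pi$; and (b2) $A:=\End_G(\Pi)$ is module-finite over the image of $\mathcal Z(\Theta)$ (via the description of $A$ through intertwining operators, $A$ being a twisted form of $R\rtimes W$). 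Granting these, $\mathcal R_{\mathfrak s}(G)$ is Morita equivalent to the category of $A$-modules, $(\mathcal H\star\epsilon_L)_{\mathfrak s}$ corresponds to a finitely generated projective $A$-module $Q$, and $(\mathcal H_L)_{\mathfrak s}=\End_G((\mathcal H\star\epsilon_L)_{\mathfrak s})^{\mathrm{op}}\cong\End_A(Q)^{\mathrm{op}}$ is, as a $\mathcal Z(\Theta)$-module, a direct summand of $M_m(A)$ (since $Q$ is a direct summand of $A^{m}$). As $A$ is module-finite over $\mathcal Z(\Theta)$, so is $M_m(A)$, and hence so is its summand $(\mathcal H_L)_{\mathfrak s}$. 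Summing over the finitely many relevant $\mathfrak s$ gives that $\mathcal H_L$ is finitely generated over $\mathcal Z(G)$.

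The main obstacle is input (b2) — equivalently, the finite generation of the Hecke algebra of a single Bernstein block over its centre; together with the Bernstein decomposition itself and the finiteness of blocks meeting $\mathcal H_L$, this is the substantive content of the theory of the Bernstein centre, and in the present paper it is legitimate simply to cite (\cite{Be}, Proposition 3.3). As the concrete reason (b2) is plausible I would record that $\Pi^{L}$ is already a finitely generated $R$-module: writing a section of $\Pi$ by its values on representatives of the finite set $P\backslash G/L$, and using that $\delta_P^{1/2}$ and every unramified character are trivial on compact subgroups, one gets $\Pi^{L}\cong\bigoplus_{PgL}\rho^{\,\overline{P\cap gLg^{-1}}}\otimes_{\mathbb C}R$, a finite sum of finitely generated free $R$-modules. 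Everything else — the reductions of the second paragraph and the commutative-algebra bookkeeping of the third — is elementary.
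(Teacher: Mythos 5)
The paper does not prove Theorem~\ref{bc-4}; it is recalled verbatim as a citation of (\cite{Be}, Proposition~3.3), with no argument offered. So there is no proof in the paper to compare against, and the honest comparison is that the author simply cites Bernstein, whereas you attempt to supply a proof.

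Your second paragraph is a clean and correct reduction of the general claim to the single case $V=\mathcal H\star\epsilon_L$, i.e.\ to finite generation of $\mathcal H_L$ over $\mathcal Z(G)$; you are right to realise $\epsilon_L\star\mathcal H\star\epsilon_{K_0}$ as a \emph{direct summand} rather than merely a submodule of $\mathcal H_L$, which matters because $\mathcal Z(G)=\prod_\Theta\mathcal Z(\Theta)$ is not Noetherian. The problem is in the third paragraph, and you name it yourself: input (b2), that $A=\End_G(\Pi)$ is module-finite over $\mathcal Z(\Theta)$, is — after undoing the Morita bookkeeping you set up — precisely the finite generation of the $\mathfrak s$-component of the Hecke algebra over $\mathcal Z(\Theta)$, i.e.\ exactly what you set out to prove, block by block. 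As written, then, your argument reduces (\cite{Be}, Proposition~3.3) to a reformulation of itself and does not constitute an independent proof. The Mackey computation showing $\Pi^L$ is finite free over $R$ is true but insufficient: it controls $\Pi^L$ as an $R$-module, not $\End_G(\Pi)$ as a $\mathcal Z(\Theta)$-module, and passing from the former to the latter is exactly the nontrivial content (the description of $A$ as a twisted form of $R\rtimes W$), which is not established here and is at least as deep as Bernstein's result. Two smaller slips in (b1): to show $\Pi$ is a \emph{generator} one needs a nonzero map \emph{from} $\Pi$ to every irreducible of the block, which by second adjointness is governed by non-vanishing of the Jacquet module along the opposite parabolic $\bar P$, not along $P$; and ``admits a non-zero map to $\Pi$'' has the arrow pointing the wrong way (that is the cogenerator property). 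These are repairable, but the essential content is still carried by the citation, just as it is in the paper.
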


\vskip .2in
Finally, we recall the Decomposition theorem (\cite{Be}, 2.10):

\begin{Thm} \label{bc-5}   Let $V$ be a smooth $G$--module. Let $1_\Theta \in \mathcal Z(\Theta)$ be the identity for each connected component $\Theta$. Then, 
  $1_\Theta$ act on $V$ as a  projector on a $G$--submodule denoted by $V_\Theta$. We  have
  $$
  V=\oplus_\Theta \ V_\Theta
  $$
Moreover, for any open compact subgroup $L\subset V$, there exists only finitely many connected components $\Theta$ such that $V^L_\Theta\neq 0$.
\end{Thm}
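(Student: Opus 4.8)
This is Bernstein's decomposition theorem (\cite{Be}, Section~2), and the plan is to reconstruct the standard argument rather than to look for a shortcut. For each connected component $\Theta\subset\Theta(G)$ let $\mathrm{Rep}(G)_\Theta$ be the full subcategory of smooth $G$--modules all of whose irreducible subquotients have cuspidal support in $\Theta$, and for an arbitrary smooth $V$ let $V_\Theta\subseteq V$ be the largest $G$--submodule lying in $\mathrm{Rep}(G)_\Theta$. What has to be shown is that $V=\bigoplus_\Theta V_\Theta$, that this decomposition is effected by a canonical family of orthogonal idempotents $1_\Theta$ in the Bernstein centre, and that for a fixed open--compact $L$ only finitely many $\Theta$ satisfy $V^L_\Theta\neq 0$.

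First I would recall, from Bernstein--Zelevinsky (uniqueness of supercuspidal support, as used in Section~\ref{bc}), that the cuspidal support is a well--defined map $\Irr(G)\to\Theta(G)$, so that $\Irr(G)$ is partitioned according to components. The substantive structural input is then the non--interaction of distinct blocks: if $\Theta\neq\Theta'$, $\pi\in\Irr(G)_\Theta$ and $\pi'\in\Irr(G)_{\Theta'}$, then $\Hom_G(\pi,\pi')=0$ (Schur, once cuspidal support is well defined) and, crucially, $\mathrm{Ext}^1_G(\pi,\pi')=0$; equivalently the natural functor $\prod_\Theta\mathrm{Rep}(G)_\Theta\to\mathrm{Rep}(G)$ is an equivalence. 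The $\mathrm{Ext}^1$--vanishing is obtained by realising an extension inside a parabolically induced module $\Ind_P^G(\rho)$ and using exactness of the Jacquet functors together with Bernstein's second adjointness: the Jacquet modules along every parabolic separate the two cuspidal supports up to unramified twist, which forces the extension to split.

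Granting this, the projections $V\mapsto V_\Theta$ are additive and natural in $V$, hence are given by a family of orthogonal idempotent endomorphisms $1_\Theta$ of the identity functor of $\mathrm{Rep}(G)$, i.e. by elements of $\mathcal Z(G)=\End(\mathrm{Id}_{\mathrm{Rep}(G)})$, with $1_\Theta$ acting on each $V$ as the projector onto $V_\Theta$ and $V=\bigoplus_\Theta V_\Theta$; identifying $1_\Theta\mathcal Z(G)$ with the coordinate ring $\mathcal Z(\Theta)$ of the component is done by tracking how $\Psi(M)$ acts on the universal module $\Ind_P^G(\mathbb C[\Psi(M)]\otimes\rho)$. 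For the local finiteness, note that $V^L_\Theta=1_\Theta\epsilon_L\cdot V^L$, so it suffices to show that $\epsilon_L$ lies in only finitely many blocks, i.e. that only finitely many components $\Theta=[M,\psi\rho]$, $\psi\in\Psi(M)$, contain a representation with a nonzero $L$--fixed vector. By Frobenius reciprocity and exactness of the Jacquet functor this forces $\rho$ --- up to the unramified twist, which is irrelevant for vectors fixed by a compact subgroup --- to have a nonzero vector fixed by a compact--open subgroup of $M$ depending only on $L$ and $P$; the claim then reduces to the finiteness, modulo $\Psi(M)$, of the set of supercuspidal representations of a fixed standard Levi $M$ having a nonzero vector fixed by a given compact--open subgroup, a standard finiteness (provable e.g. via the compact--induction description of supercuspidals). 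Running this over the finitely many standard parabolics $P=MP_0$ completes the argument.

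The step I expect to be genuinely hard is the non--interaction of distinct blocks, and inside it the vanishing of $\mathrm{Ext}^1$ between irreducibles whose cuspidal supports lie in different components: this is where the deep input (second adjointness, or an explicit Jacquet--module / geometric--lemma computation) is unavoidable. Everything downstream --- the category splitting, the construction of the central idempotents $1_\Theta$, and the level--finiteness --- is comparatively formal once that vanishing and the bounded--level finiteness of supercuspidals are in hand. In practice I would simply follow \cite{Be}, Sections~2 and~3.
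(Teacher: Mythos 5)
The paper does not prove this statement at all: Theorem~\ref{bc-5} is stated purely as a recalled background result, with the citation (\cite{Be}, 2.10), alongside Theorems~\ref{bc-3} and~\ref{bc-4}, to be used later in the proof of Theorem~\ref{intr-m-1}. There is therefore no in-paper proof to compare yours against; the paper's ``approach'' is simply to invoke Bernstein.

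As a reconstruction of Bernstein's argument your sketch is sound in outline, and you correctly locate the real difficulty: the non-interaction of distinct blocks, in particular the vanishing of $\mathrm{Ext}^1_G(\pi,\pi')$ for irreducibles $\pi,\pi'$ with cuspidal supports in different components, where second adjointness (or an explicit Jacquet-module computation) is unavoidable. The reduction of the level-$L$ finiteness to the finiteness, modulo unramified twist, of supercuspidals of a fixed standard Levi admitting a nonzero vector fixed by a fixed compact open subgroup is also the right move; one should spell out that if $V^L_\Theta\neq 0$ then the submodule generated by any nonzero $L$-fixed vector has an irreducible quotient in $\mathrm{Irr}(G)_\Theta$ with a nonzero $L$-fixed vector, which is what justifies the reduction. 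Two caveats worth recording. First, deducing that the category splits from $\Hom$- and $\mathrm{Ext}^1$-vanishing between simples requires some care when objects have infinite length; Bernstein's argument actually proceeds through finitely generated modules, uniform admissibility, and explicit projective generators for each block rather than a bare $\mathrm{Ext}^1$ criterion, and you should be aware that this is where most of the technical work sits. Second, the phrase ``$\epsilon_L$ lies in only finitely many blocks'' is loose (the idempotent $\epsilon_L$ is an element of $\mathcal H_L$, not a representation); what is meant, and what your subsequent sentence correctly states, is that only finitely many components $\Theta$ support an irreducible with a nonzero $L$-fixed vector. With those understandings, the outline is faithful to the argument in \cite{Be}.
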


\vskip .2in
We end this section with the following two well--known observations:

\begin{Cor} \label{bc-600}  Assume that $V$ is an irreducible  smooth $G$--module. Then, there exists a unique connected component $\Theta$ such that $V=V_\Theta$. Moreover, we have
  $inf. char. (V)\in \Theta$
\end{Cor}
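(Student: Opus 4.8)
The plan is to read the statement off from the Decomposition theorem (Theorem~\ref{bc-5}) together with property (C-1) of Theorem~\ref{bc-3}. First I would apply Theorem~\ref{bc-5} to obtain the decomposition $V=\oplus_\Theta V_\Theta$ into $G$-submodules, indexed by the connected components $\Theta$ of $\Theta(G)$. An irreducible module is in particular nonzero, so at least one summand $V_\Theta$ is nonzero; since $V_\Theta$ is a $G$-submodule of the irreducible module $V$, that summand must equal $V$. Then for every other component $\Theta'$ one has $V_{\Theta'}\subset V=V_\Theta$ while $V_{\Theta'}\cap V_\Theta=0$ by directness of the sum, whence $V_{\Theta'}=0$. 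This yields both the existence of a component $\Theta$ with $V=V_\Theta$ and its uniqueness.

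It then remains to locate $inf. char. (V)$. By (C-1), every $z\in\mathcal Z(G)$ acts on the irreducible module $V$ as the scalar $inf. char. (V)(z)$. I would apply this to $z=1_\Theta$: on the one hand $1_\Theta$ acts as the identity on $V=V_\Theta$ by Theorem~\ref{bc-5}, so $inf. char. (V)(1_\Theta)=1$; on the other hand, viewed as a regular function on $\Theta(G)=\cup_{\Theta'}\Theta'$, the element $1_\Theta$ is the indicator of the component $\Theta$ --- it is the identity of $\mathcal Z(\Theta)$ and vanishes in $\mathcal Z(\Theta')$ for every $\Theta'\neq\Theta$ --- so evaluating it at the point $inf. char. (V)$ gives $1$ precisely when $inf. char. (V)\in\Theta$. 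Comparing the two computations gives $inf. char. (V)\in\Theta$, as claimed.

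The argument is entirely formal given the results recalled in this section, so I do not expect a genuine obstacle. The one point I would be careful about is the bookkeeping in the second paragraph: interpreting the scalar $inf. char. (V)(z)$ occurring in (C-1) as the value at the point $inf. char. (V)$ of the regular function $z$ on the affine variety $\Theta(G)$, and checking that under this interpretation $1_\Theta$ is the characteristic function of $\Theta$, so that the algebraic assertion ``$1_\Theta$ acts by the identity on $V$'' translates into the geometric assertion ``$inf. char. (V)$ lies in $\Theta$''.
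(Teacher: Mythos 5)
Your proposal is correct and follows the same route as the paper: irreducibility forces exactly one summand in the decomposition $V=\oplus_\Theta V_\Theta$ from Theorem~\ref{bc-5} to be nonzero and equal to $V$, and then applying (C-1) of Theorem~\ref{bc-3} to $z=1_\Theta$ (which is the indicator of $\Theta$ on $\Theta(G)$) yields $inf.\,char.(V)(1_\Theta)=1$, hence $inf.\,char.(V)\in\Theta$. You simply spell out the steps the paper calls ``obvious.''
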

\begin{proof} Since $V$ is irreducible, the first claim is obvious from Theorem \ref{bc-5}.  This implies that $1_\Theta$ acts as identity on $V$. In particular,
  $$
  inf. char. (V)(1_\theta)=1
  $$
  by (C-1) in Theorem \ref{bc-3}.  
This implies the second claim.  
\end{proof}

\vskip .2in

\begin{Cor} \label{bc-6}  Let $\Theta$ be a connected component. Then, the functor $V\longmapsto V_\Theta$ is exact functor from the category of all smooth $G$--modules into the
  same category.
\end{Cor}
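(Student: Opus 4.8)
The plan is to deduce exactness directly from the Decomposition theorem (Theorem \ref{bc-5}) together with the functoriality property (C-2) in Theorem \ref{bc-3}; no new input about the Bernstein center is needed. Write $e_\Theta\in \End_G(V)$ for the image of $1_\Theta\in\mathcal Z(G)$ under the $\mathbb C$--algebra homomorphism $\mathcal Z(G)\longrightarrow \End_G(V)$ of Theorem \ref{bc-3}. Since $1_\Theta$ is an idempotent of $\mathcal Z(G)=\prod_{\Theta'}\mathcal Z(\Theta')$, the endomorphism $e_\Theta$ is an idempotent $G$--module endomorphism of $V$, and by Theorem \ref{bc-5} its image is precisely $V_\Theta$. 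First I would note that $V_\Theta$, being a $G$--submodule (indeed a direct summand) of the smooth $G$--module $V$, is again smooth, so the assignment $V\longmapsto V_\Theta$ does take values in the stated category.

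Next I would upgrade $V\longmapsto V_\Theta$ to a functor. Given a morphism $\phi\colon V\longrightarrow V'$ of smooth $G$--modules, property (C-2) gives $\phi\in\Hom_{\mathcal Z(G)}(V,V')$, hence $\phi\circ e_\Theta=e_\Theta\circ\phi$; therefore $\phi(V_\Theta)\subseteq V'_\Theta$ and we define $\phi_\Theta$ to be the restriction $\phi|_{V_\Theta}\colon V_\Theta\longrightarrow V'_\Theta$. Compatibility with composition and with identity morphisms is immediate from the corresponding statements for $\phi$.

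Finally, for exactness I would take a short exact sequence $0\longrightarrow V'\overset{\iota}{\longrightarrow} V\overset{\pi}{\longrightarrow} V''\longrightarrow 0$ of smooth $G$--modules and run the standard diagram chase, using throughout that $\iota$ and $\pi$ commute with the operators $e_\Theta$. Injectivity of $\iota_\Theta$ is clear since $\iota$ is injective; and $\pi_\Theta\circ\iota_\Theta=0$ since $\pi\circ\iota=0$. For exactness in the middle, if $v\in V_\Theta$ with $\pi(v)=0$, choose $v'\in V'$ with $\iota(v')=v$; then $v=e_\Theta(v)=e_\Theta(\iota(v'))=\iota(e_\Theta(v'))$ with $e_\Theta(v')\in V'_\Theta$. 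For surjectivity of $\pi_\Theta$, given $v''\in V''_\Theta$ choose $v\in V$ with $\pi(v)=v''$; then $v''=e_\Theta(v'')=e_\Theta(\pi(v))=\pi(e_\Theta(v))$ with $e_\Theta(v)\in V_\Theta$. This yields exactness of $0\longrightarrow V'_\Theta\longrightarrow V_\Theta\longrightarrow V''_\Theta\longrightarrow 0$.

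I do not expect a genuine obstacle here: the content is the formal fact that $V\longmapsto V_\Theta$ is "cutting out a direct summand by an idempotent of the Bernstein center'', and the only substantive ingredient is that this idempotent acts through $G$--module maps compatibly with all morphisms, which is exactly (C-2) in Theorem \ref{bc-3}; the rest is the routine chase above. Equivalently, one may simply observe that Theorem \ref{bc-5} exhibits the category of smooth $G$--modules as a product $\prod_\Theta \mathcal{M}_\Theta$ of (Serre) subcategories $\mathcal{M}_\Theta=\{V : V=V_\Theta\}$, and that $V\longmapsto V_\Theta$ is the projection onto one factor, which is exact because kernels and cokernels are formed componentwise.
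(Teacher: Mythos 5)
Your proof is correct and follows essentially the same route as the paper: both rely on Theorem \ref{bc-3} (C-2) to conclude that morphisms of smooth $G$--modules commute with the idempotent $e_\Theta$ (the action of $1_\Theta$), and then run the standard diagram chase for exactness. Your write-up is a bit more complete than the paper's, which only spells out exactness in the middle and leaves surjectivity as an analogous step, and it also records explicitly the functoriality of $V\longmapsto V_\Theta$; the closing remark about smooth modules decomposing as a product of subcategories is a nice conceptual reformulation of the same content.
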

\begin{proof} Consider the short exact sequence
  $$
  \begin{CD}
    0@>>> V@>\alpha >> W @>\beta>> U @>>>0 
  \end{CD}
  $$
  of smooth $G$--modules maps. By Theorem \ref{bc-3}  (C-2), we have 
  $$
  \left(\text{the action of $1_\Theta$ on $W$}\right) \circ \alpha= \alpha \circ  \left(\text{the action of $1_\Theta$ on $V$}\right),
  $$
  and
  $$
  \left(\text{the action of $1_\Theta$ on $U$}\right)\circ \beta= \alpha \circ  \left(\text{the action of $1_\Theta$ on $W$}\right).
  $$

  Since $1_\Theta$ acts as a projection, i.e., $1^2_\Theta= 1_\Theta$, this implies that we have the following sequence of $G$--modules maps:
   $$
  \begin{CD}
    V_\Theta @>\alpha|_{V_\Theta} >> W_\Theta @>\beta|_{W_\Theta}>> U_\Theta.
  \end{CD}
  $$
  It is obvious that $\alpha|_{V_\Theta}$ is injective and that $\beta|_{W_\Theta}\circ \alpha|_{V_\Theta}=0$.  Again, since $1_\Theta$ acts as a projection,
  one sees that the kernel of $\beta|_{W_\Theta}$ is equal  to the image of $\alpha|_{V_\Theta}$ as well as that $\beta|_{W_\Theta}$ is surjective.
  For example, if $\beta|_{V_\Theta}(w)=0$, then there exists
  $v\in V$ such that $w=\alpha(v)$ since the original sequence is exact. Then,
  $$
  w=1_\Theta.w= 1_\Theta.\alpha(v)=\alpha\left(1_\Theta.v\right)= \alpha|_{V_\Theta}\left(1_\Theta.v\right).
  $$
  \end{proof}

\section{The proof of Theorem \ref{intr-m-1}}\label{pmr}

As we already mentioned in the introduction, it is observed in (\cite{muic}, Theorem 3-9) (see Lemma \ref{prelim-1} (v)) that a smooth $G$--module
$$
  \mathcal W(I, L)\overset{def}{=} \mathcal H \star \epsilon_L/\mathcal H\star I
  $$
has a unique maximal proper subrepresentation, and the corresponding quotient is
$\mathcal V(I, L)$. Moreover, the canonical projection $\mathcal W(I, L)^L\longrightarrow \mathcal V(I, L)^L$ is an isomorphism of $\mathcal H_L$--modules.

It remains to prove the most difficult part: $\mathcal W(I,  L)$ is an admissible smooth $G$--module of finite length,
and all irreducible subquotients have the same infinitesimal character. For this, we use results on Bernstein center (\cite{bdk}, \cite{Be})
stated in  Section \ref{bc}.

First, we use the Decomposition theorem (see Theorem \ref{bc-5}). As a result, we obtain the decomposition
as smooth $G$--modules
$$
\mathcal W(I, L)= \oplus_\Theta \ \mathcal W(I, L)_\Theta, 
$$
and as $\mathcal H_L$--modules
$$
\mathcal W(I, L)^L= \oplus_\Theta \ \mathcal W(I, L)^L_\Theta.
$$
But as we recalled above,  $\mathcal W(I, L)^L$ is isomorphic to $\mathcal V(I, L)^L$ as a $\mathcal H_L$--module. Hence,
$\mathcal W(I, L)^L$ is irreducible $\mathcal H_L$--module. Therefore, there exists
a unique connected component $\Theta$ such that
$$
\mathcal W(I, L)^L=  \mathcal W(I, L)^L_\Theta.
$$
Since
\begin{equation}\label{pmr-1}
  \epsilon_L+ \mathcal H\star I \in \mathcal W(I, L)^L
\end{equation}
generates $\mathcal W(I, L)$ as a $\mathcal H$--module (see Lemma \ref{prelim-1} (v)), we see that
$$
\mathcal W(I, L)=  \mathcal W(I, L)_\Theta.
$$
Since irreducible smooth module $\mathcal V(I, L)$  is a quotient of $\mathcal W(I, L)$, we must have by Corollary \ref{bc-6}
$$
\mathcal V(I, L)=  \mathcal V(I, L)_\Theta.
$$

This implies that  the  infinitesimal character of $\mathcal V(I, L)$ , say
$$
\theta=[M, \rho],
$$
must belong to the connected component $\Theta$ (see Corollary \ref{bc-600}).  Next, by Theorem \ref{bc-3} (C-1),  we must have that  $\mathcal Z(G)$ acts as
as a character $\theta$ i.e.,
$$
z=\theta(z)1_{\mathcal V(I, L)}, \ \ z\in \mathcal Z(G).
$$

Using the isomorphism of  $\mathcal W(I, L)^L\simeq \mathcal V(I, L)^L$ of $\mathcal H_L$--modules, we see that
$\mathcal Z(G)$ acts as a character $\theta$ on $\mathcal W(I, L)^L$. 
But,  $\mathcal W(I, L)$ is as a smooth $G$--module generated by a class in (\ref{pmr-1}). Hence,
$\mathcal Z(G)$ acts as  a character $\theta$ on $\mathcal W(I, L)$. This implies that every irreducible subqoutient of $\mathcal W(I, L)$ has infinitesimal character $\theta$.

Finally, we prove that $\mathcal W(I,  L)$ is an admissible smooth $G$--module of finite length. Let $L'\subset G$ be an open compact subgroup. Then, by Theorem \ref{bc-4}, 
$\mathcal W(I, L)^{L'}$ is finitely generated as $\mathcal Z(G)$--module.  Since  $\mathcal Z(G)$ acts on  $\mathcal W(I, L)$ as a character $\theta$, we see that
$\dim_{\mathbb C} \mathcal W(I, L)^{L'}<\infty$. Since $L'$ is arbitrary, we obtain that  $\mathcal W(I,  L)$ is  admissible. Finally, it has finite length since every finitely generated
admissible $G$--module has finite length (see \cite{cas}, theorem 6.3.10, or  \cite{BZ}, Theorem 4.1 for $GL_N$). This completes the proof of  Theorem \ref{intr-m-1}.

\section{Computation of Certain Jacquet modules and Proof of Proposition \ref{intr-m-5}}\label{cjm}

We maintain the notation from the Introduction and Section \ref{bc}. We recall the notion of a normalized Jacquet module.  Let $P=MU$  be a standard parabolic subgroup of $G$.
Let $V$ be a smooth $G$--module. Then, $\mathbb C$--span, say $V(U)$,  of all $v- u.v$, $v\in V$ and $u\in U$, is $M$--invariant. Therefore, the quotient $V/V(U)$ is canonically
smooth $M$--module. The corresponding normalized Jacquet module $r_{M, G}\left(V\right)$ is the space
$V/V(U)$ under the action of $M$ given by
$$
m.\left(v+ V(U)\right)= \delta^{-1/2}_{P}(m) m.v + V(U), \ \ v\in V, \ m\in M.
$$
Here $\delta_P$ is usual modular character of $P$.

Let us fix some Haar measure on $U$, for example normalized with $\int_{U\cap K} du=1$, where $K$ is a hyperspecial maximal compact subgroup
(see the text before the statement of  Proposition  \ref{intr-m-5}).
The space $V(U)$, can also be characterized as follows (see \cite{cas}, 3.2): $v\in V(U)$ if and only if $\int_{L_U} u.v\ du=0$ for some open compact subgroup $L_U\subset U$.  

\vskip .2in

We begin with

\begin{Lem}\label{cjm-1} 
  Let $P=MU$  be a standard parabolic subgroup of $G$. Let $f\longmapsto f_P$ be the constant term map $C_c^\infty(G)\longrightarrow C_c^\infty(U\backslash G)$: $f_p(x)=\int_U f(ux) du$. 
  Let $V\subset C_c^\infty(G)$ be a smooth $G$--submodule under the left translation $l$. Then, $r_{M, G}(V)$ is the image of $V$ under the constant term map, and the action of $m\in M$ is given by
  $\delta^{1/2}_P(m) l(m)$.
  \end{Lem}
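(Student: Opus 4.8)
The idea is to study the restriction to $V$ of the constant term map $T\colon f\mapsto f_P$ and to establish two facts: that $T$ converts left translation into a $\delta_P$‑twisted translation on functions on $U\backslash G$, and that $\ker\big(T|_V\big)=V(U)$. Granting these, $T$ induces a bijection $\overline T\colon V/V(U)\longrightarrow T(V)$ with $T(V)\subset C_c^\infty(U\backslash G)$; the twisting property shows $T(V)$ is stable under $m\mapsto\delta_P(m)\,l(m)$, and that, for this action on $T(V)$, the map $\overline T$ is $M$‑equivariant for the \emph{un‑normalized} Jacquet action $m\cdot\bar v=\overline{l(m)v}$ on $V/V(U)$. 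Multiplying the $M$‑action by $\delta_P^{-1/2}(m)$ on both sides then turns the source into the normalized Jacquet module $r_{M,G}(V)$ and the target into $T(V)$ with the action $m\mapsto\delta_P^{1/2}(m)\,l(m)$, which is precisely the assertion.

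\textbf{The twisting property.} For $m\in M$ and $x\in G$ compute
\[
T\big(l(m)f\big)(x)=\int_U f\big(m^{-1}ux\big)\,du=\int_U f\big((m^{-1}um)(m^{-1}x)\big)\,du,
\]
and substitute $u=mvm^{-1}$, using the defining property of $\delta_P$ relative to the conjugation action of $M$ on $U$, namely $d(mvm^{-1})=\delta_P(m)\,dv$. This gives $T(l(m)f)(x)=\delta_P(m)\,(Tf)(m^{-1}x)=\delta_P(m)\big(l(m)(Tf)\big)(x)$. In particular, for $u_0\in U$ we get $T(l(u_0)f)=Tf$, since $\delta_P$ is trivial on $U$ and left translation by $U$ acts trivially on $C_c^\infty(U\backslash G)$; hence $V(U)\subseteq\ker(T|_V)$, because $V(U)$ is spanned by the vectors $v-l(u_0)v$. (That $T$ is in fact onto $C_c^\infty(U\backslash G)$ is standard, but only the image $T(V)$, which is $M$‑stable by the computation above, is needed here.)

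\textbf{The kernel.} The one genuinely non‑formal point is $\ker(T|_V)\subseteq V(U)$: given $f\in V$ with $\int_U f(ux)\,du=0$ for all $x$, I must produce a \emph{single} open compact subgroup $U_1\subseteq U$ with $\int_{U_1}f(ux)\,du=0$ for all $x$; substituting $u\mapsto u^{-1}$ (legitimate, $U$ being unimodular) then gives $\int_{U_1}l(u)f\,du=0$, i.e.\ $\epsilon_{U_1}.f=0$, and the characterization of $V(U)$ recalled above (\cite{cas}, 3.2) yields $f\in V(U)$. To find $U_1$, put $S=\supp(f)$, a compact set, and note that $(SS^{-1})\cap U$ is a compact subset of $U$: if $ux,u'x\in S$ then $u(u')^{-1}\in(SS^{-1})\cap U$, so for each fixed $x$ the set $\{u\in U:ux\in S\}$ lies in a single left coset of any open compact subgroup $U_0\subseteq U$ containing $(SS^{-1})\cap U$. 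Now pick an open compact subgroup $U_1\subseteq U$ containing both $U_0$ and some $L\cap U$ with $f$ left $L$‑invariant, so that $u\mapsto f(ux)$ is left $(L\cap U)$‑invariant. For each $x$ one has a dichotomy: either $\{u:ux\in S\}$ is disjoint from $U_1$, in which case $\int_{U_1}f(ux)\,du=0$; or it meets $U_1$, and then, since it is contained in a single coset $U_0 u_x$ with $U_0\subseteq U_1$, it lies entirely inside $U_1$, so $\int_{U_1}f(ux)\,du=\int_U f(ux)\,du=0$. Either way the integral vanishes, as required. I expect this uniformity‑in‑$x$ argument to be the main obstacle; the remainder is modular‑character bookkeeping together with the formal assembly of the isomorphism above.
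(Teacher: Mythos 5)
Your proof is correct, and it takes a genuinely different route for the only non-formal step, namely the containment $\ker(T|_V)\subseteq V(U)$. The paper achieves the required uniformity in $x$ by invoking the Iwasawa decomposition $G=UMK$: it fixes right $L$-invariance of $f$, writes $K$ as a finite union of cosets $k_iL$, and for each $i$ extracts a compact set $\Omega_{U,i}\subset U$ controlling the $u$-support of $u\mapsto f(umk_i)$, then takes $L_U$ containing all of them. You instead observe directly that $(SS^{-1})\cap U$ (with $S=\supp f$) is compact, so any open compact $U_0\subseteq U$ containing it has the property that for every $x\in G$ the $U$-fiber $\{u\in U: ux\in S\}$ sits in a single left $U_0$-coset; the dichotomy "disjoint from $U_1$ or entirely inside $U_1$" then gives $\int_{U_1}f(ux)\,du=\int_U f(ux)\,du=0$ for every $x$. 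This is cleaner: it avoids the Iwasawa decomposition entirely and isolates the real mechanism (compactness of $SS^{-1}$ controlling how far the fiber over $x$ can wander inside $U$), whereas the paper's route buys concreteness via an explicit finite set of representatives. One small remark: the auxiliary requirement that $U_1$ also contain $L\cap U$ (for some $L$ under which $f$ is left-invariant) is never used in your argument and can be dropped. The twisting computation and the final $\delta_P^{\pm 1/2}$ bookkeeping agree with the paper's.
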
 
\begin{proof} It is obvious that the restriction of the constant term map factorizes through $V/V(U)$. It remains to prove that $f_P$ restricted to $V$ has kernel exactly
  $V(U)$. So, let $f\in V$ such that
  
\begin{equation} \label{cjm-2}
  f_P(x)=\int_{U} f(ux) du=0,  \ \ x\in G.
  \end{equation}

Then, by above recalled characterization of $V(U)$, we must prove that there exists an open compact subgroup $L_U\subset U$ such that
  \begin{equation} \label{cjm-3}
  \int_{L_U} f(ux) du= \int_{L_U} f(u^{-1}x) du= \int_{L_U} l(u)f(x) du=0,  \ \ x\in G.
  \end{equation}

  Let $L\subset K$ be a normal  open compact subgroup such that $f$ is right invariant under $L$. We fix a decomposition
  $$
  K=\cup_{i=1}^l k_iL= \cup_{i=1}^l L k_i , \ \ \text{(disjoint union)}.
  $$
  By Iwasawa decomposition $P=UMK$,  the function $f_P$ is determined by its values on the sets $Mk_i$, $i=1, \ldots, l$.  Let
  $\Omega$ be the (compact) support of $f$. We may assume that $\Omega=\Omega\cdot L$. For any $i=1, \ldots, l$, we define a compact set
  $\Omega_{M, i}$ as the image  of $\Omega\cdot k^{-1}_i\cap P$ of the projection of $P$ onto $M\simeq U\backslash P$. Now, for any $i=1, \ldots, l$,
  there exists a compact subset  $\Omega_{U, i}\subset U$ such that if for $u\in U$, 
  $ f(umk_i)\neq 0$, for some  $m\in M$, then $u\in  \Omega_{U, i}$.

  But since $U$ contains arbitrarily large open compact subgroups, there exists an open compact subgroup $L_U\subset U$ such that
   \begin{equation} \label{cjm-4}
  \cup_{i=1}^l \ \Omega_{U, i}\subset L_U.
   \end{equation}
   
 Thus, we have
 \begin{equation} \label{cjm-5}
  \text{if for $u\in U$,} \ \   f(umk_i) \neq 0, \ \text{for some $m\in M$ and $i$, then $u\in L_U$.} 
 \end{equation}
 
  Now, we compute using (\ref{cjm-2}) and (\ref{cjm-5})
  \begin{equation} \label{cjm-6}
 \int_{L_U} f(umk_i)du=  \int_U f(u mk_i)du= f_P(mk_i)=0,
  \end{equation}
 for all $m\in M$, and all $i=1, \ldots, l$.

 Let  $x\in G$. Then, we can write $x=umk_il$, where $u\in U, m\in M, l\in L$ for some $i$. Since $f$ is right invariant under $L$, we may assume that $l=1$. 
 Now, for $u'\in L_U$, by (\ref{cjm-5}),  we have that
 $f(u'umk_i)\neq 0$  implies $u'u\in L_U$, and consequently $u\in L_U$.  Thus, if $u\in L_U$, then,  using (\ref{cjm-6}), we have the following:
 $$
 \int_{L_U} f(u'x) du'=\int_{L_U} f(u'mk_i) du'=0
 $$
 proving (\ref{cjm-3}).  If $u\not\in L_U$, then $f(u'umk_i)= 0$ for all $u'\in L_U$. Again, this implies
 $$
 \int_{L_U} f(u'x) du'=0.
 $$

 The action of $M$ on $r_{M, G}(V)$ is given by
 $$
 m.f_P(x)= \delta^{-1/2}_P(m) \int_U \ f(m^{-1}ux) du= \delta^{1/2}_P(m) \int_U \ f(u m^{-1}x) du= \delta^{1/2}_P(m) l(m)f_P(x),
 $$
 for $x\in G$. \end{proof}

\vskip .2in
Let $^0\!\!M_0$ be as usual the intersection of the kernels of all characters $|\chi|_k$, where $\chi$ ranges over all rational characters
$\chi: M_0\longrightarrow k^\times$. Here $|\ |_k$ is the norm
of $k$.  Since $P_0$ is a minimal $k$--parabolic subgroup of $G$, we have (see \cite{cartier})
$$
^0\!\!M_0=M_0\cap K.
$$

\vskip .2in
Lemma \ref{cjm-1} implies the following result: 

\begin{Cor}\label{cjm-7}  Let $\delta_0$ be the modular character of $P_0$. Assume that $I$ is a maximal left ideal in $\mathcal H_K$. We define $J$ to be the
  $\mathbb C$--span of all $\delta^{1/2}_0(m_0)l(m_0)f_P$, where $f\in I$, and $m_0\in M_0$.  Then, we have the following isomorphism $M_0$--modules:
  $$
  r_{M_0, G}\left(\mathcal W(I, K)\right)\simeq C_c^\infty(M_0/ ^0\!\!M_0)/J.
  $$
\end{Cor}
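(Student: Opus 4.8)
The plan is to apply Lemma~\ref{cjm-1} to the concrete realization of $\mathcal W(I, K)$ as a subquotient of $C_c^\infty(G)$ under left translation, and then to identify the constant term map explicitly on this space. First I would note that $\mathcal W(I, K) = \mathcal H \star \epsilon_K / \mathcal H \star I$ is, by definition, a subquotient of $\mathcal H = C_c^\infty(G)$ with the left translation action $l$: indeed $\mathcal H \star \epsilon_K$ is the $G$-submodule of right-$K$-invariant functions, and $\mathcal H \star I$ is a $G$-submodule inside it. So Lemma~\ref{cjm-1} applies (the statement there is for submodules, but it extends to subquotients by naturality of the constant term map): $r_{M_0, G}(\mathcal W(I, K))$ is the image of $\mathcal H \star \epsilon_K$ under $f \mapsto f_{P_0}$, modulo the image of $\mathcal H \star I$, with $M_0$ acting by $\delta_0^{1/2}(m_0) l(m_0)$.

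Next I would compute the image of $\mathcal H \star \epsilon_K$ under the constant term map. A function in $\mathcal H \star \epsilon_K$ is right-$K$-invariant, so its constant term $f_{P_0} \in C_c^\infty(U_0 \backslash G)$ is right-$K$-invariant and left-$U_0$-invariant; using the Iwasawa decomposition $G = U_0 M_0 K$ and $^0\!M_0 = M_0 \cap K$, such a function is determined by and can be identified with an element of $C_c^\infty(M_0 / {}^0\!M_0)$ — restriction to $M_0$ gives the identification, which is precisely the constant term composed with the projection $U_0 \backslash G \to U_0 \backslash G / K \cong M_0 / {}^0\!M_0$. I would check surjectivity of $\mathcal H \star \epsilon_K \to C_c^\infty(M_0/{}^0\!M_0)$: given any compactly supported function on $M_0/{}^0\!M_0$, one builds a preimage supported on a suitable $U_0$-compact set times $M_0 K$, averaging over $K$ on the right and over a large enough compact subgroup of $U_0$; this is a routine Iwasawa-decomposition construction, essentially dual to the argument already given inside the proof of Lemma~\ref{cjm-1}. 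Under this identification the action $\delta_0^{1/2}(m_0) l(m_0)$ on constant terms becomes the left translation action (twisted by $\delta_0^{1/2}$) on $C_c^\infty(M_0/{}^0\!M_0)$, matching the definition of $J$ in the statement.

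Then the image of $\mathcal H \star I$ under the constant term map is, by linearity, the $\mathbb C$-span of the constant terms of all $l(g) f$ for $f \in I$, $g \in G$; but the constant term of $l(u_0 m_0 k)f$ equals (up to the Iwasawa identification, which absorbs the $U_0$ and $K$ parts) a translate of $f_{P_0}$ by $m_0$, so this image is exactly the span of the $\delta_0^{1/2}(m_0) l(m_0) f_{P_0}$ with $f \in I$, $m_0 \in M_0$ — which is the space $J \subset C_c^\infty(M_0/{}^0\!M_0)$ from the statement (here I am using $K = K M_0 \cap$ whatever is needed, and that $I \subset \mathcal H_K$ consists of $K$-biinvariant functions so the left-$U_0$ and right-$K$ invariance of the relevant constant terms is automatic). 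Combining the three identifications gives $r_{M_0, G}(\mathcal W(I, K)) \cong C_c^\infty(M_0/{}^0\!M_0)/J$ as $M_0$-modules.

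The main obstacle I expect is the careful bookkeeping in the surjectivity statement and in checking that passing to the subquotient $\mathcal H \star \epsilon_K / \mathcal H \star I$ commutes with taking images under the constant term map — i.e.\ that the image of $\mathcal H \star I$ is exactly the kernel of the induced map on the quotient, equivalently that $(\mathcal H \star I) \cap (\text{things with zero constant term, i.e. } (\mathcal H \star \epsilon_K)(U_0))$ matches up correctly. This amounts to the exactness of the (normalized) Jacquet functor composed with the explicit model, and is handled by the $V(U)$-characterization already invoked in Lemma~\ref{cjm-1}: one shows the induced sequence $0 \to \operatorname{image}(\mathcal H \star I)_{U_0} \to (\mathcal H \star \epsilon_K)_{U_0} \to \mathcal W(I,K)_{U_0} \to 0$ is exact because the Jacquet functor is exact, and then transports everything through the Iwasawa identification. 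The geometric content — that constant terms of right-$K$-invariant functions live on $M_0/{}^0\!M_0$ — is then just $^0\!M_0 = M_0 \cap K$ together with $G = U_0 M_0 K$, both recalled just above the statement.
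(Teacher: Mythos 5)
Your proposal is correct and follows essentially the same approach as the paper: apply Lemma~\ref{cjm-1} to identify the Jacquet modules with images under the constant term map, use the Iwasawa decomposition $G=U_0M_0K$ with $^0\!M_0=M_0\cap K$ to identify $r_{M_0,G}(\mathcal H\star\epsilon_K)=C_c^\infty(U_0\backslash G/K)=C_c^\infty(M_0/{}^0\!M_0)$, observe that the image of $\mathcal H\star I$ is precisely $J$ (since the constant term of $l(u_0m_0k)f$ for $f\in\mathcal H_K$ is a scalar multiple of $l(m_0)f_{P_0}$), and invoke exactness of the Jacquet functor to pass to the quotient. The paper states these steps more tersely; your added detail on surjectivity and the exactness bookkeeping is just a fuller rendering of the same argument.
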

\begin{proof} By Lemma \ref{cjm-1}, we have
  $$
  r_{M_0, G}\left(\mathcal H\star \epsilon_K\right)= r_{M_0, G}\left(C_c^\infty(G/K)\right)= C_c^\infty(U\backslash  G /K)= C_c^\infty(M_0/ ^0\!\!M_0),
  $$
  with the action of $m_0\in M_0$ given by  $\delta^{1/2}_0(m_0) l(m_0)$. Also,  its submodule $\mathcal H \star I$, generated by $I$, satisfies
  $$
  r_{M_0, G}\left(\mathcal H\star I\right)= J.
  $$
  Now, the exactness of Jacquet modules (\cite{cas}, Proposition 3.2.3) implies the claim:
  $$
  r_{M_0, G}\left(\mathcal W(I, K)\right)\simeq  r_{M_0, G}\left(\mathcal H\star \epsilon_K\right)/ r_{M_0, G}\left(\mathcal H\star I\right)\simeq
  C_c^\infty(M_0/ ^0\!\!M_0)/J.
  $$
\end{proof}

\vskip .2in
Now, we compute the Jacquet module from Corollary \ref{cjm-7} using some commutative algebra via Satake isomorphism. For this, we recall some more results
from \cite{cartier}.  The group of all unramified characters of $M_0$ (see beginning of Section \ref{bc}) is given by
$$
\Psi(M_0)=\Hom_{\mathbb Z}\left(M_0/ ^0\!\!M_0, \ \mathbb C^\times\right).
$$
It is also obvious that the group $M_0/ ^0\!\!M_0$ is commutative. Therefore,  under the convolution normalized by $\int_{^0\!\!M_0} dm=1$,
$$
\mathcal A \overset{def}{=}C_c^\infty(M/ ^0\!\!M)
$$
is an associative algebra with identity $1_{^0\!\!M_0}$ isomorphic to  the group  $\mathbb C$--algebra of $M_0/ ^0\!\!M_0$. 

Since $M_0/ ^0\!\!M_0$ is a finitely generated free Abelian group, the algebra is finitely generated.  The Weyl group
$$
W=N_{G}(A_0)/ Z_G(A_0)=  N_{G}(A_0)\cap K/ ^0\!\!M_0
$$
acts by conjugation on $M_0$, $^0\!\!M_0$, and $M_0/^0\!\!M_0$, and consequently on
$\mathcal A$.

The subalgebra of all $W$--invariants $\mathcal A^W$ of $\mathcal A$ is the image of the algebra $\mathcal H_K$ under the Satake isomorphism
$$
Sf(m)=\delta_0^{-1/2}(m)f_{P_0}(m), \ \ m\in M_0,
$$
using the notation of Lemma \ref{cjm-1}. In particular,
there is a one--to--one correspondence $I\leftrightarrow \mathfrak m_I$  between   maximal (left) ideals in $\mathcal H_K$, and in $\mathcal A^W$.

\vskip .2in
Now, we make Corollary \ref{cjm-7} explicit. We have the following:

\begin{Cor}\label{cjm-8000}  Assume that $I\subset \mathcal H_K$ is a maximal left ideal. Put  $\mathfrak m=\mathfrak m_I$. Then, 
  \begin{align*}
  &  r_{M_0, G}\left(\mathcal W(I, K)\right)\simeq \mathcal A /\mathfrak m \mathcal A \\
  &  r_{M_0, G}\left(\mathcal H_K/ I\right) \simeq \mathcal A^W /\mathfrak m \mathcal A^W,
\end{align*}
where the action on the right is just the usual left translation twisted by $\delta^{1/2}_0$.
\end{Cor}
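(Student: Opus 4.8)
The plan is to derive both isomorphisms from Corollary~\ref{cjm-7} by transporting the left ideal $J$ occurring there through the Satake isomorphism, using in an essential way that $M_0/{}^0\!M_0$ is abelian, so that $\mathcal A=C_c^\infty(M_0/{}^0\!M_0)$ is a \emph{commutative} ring and the $M_0$-submodule of $\mathcal A$ generated by $\mathfrak m$ under left translation coincides with the honest ring ideal $\mathfrak m\mathcal A$.

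First I would restate the content of Corollary~\ref{cjm-7}: $r_{M_0,G}(\mathcal W(I,K))\simeq\mathcal A/J$, where $\mathcal A$ carries the $M_0$-action $m_0\mapsto\delta_0^{1/2}(m_0)\,l(m_0)$ and $J$ is the $\mathbb C$-span of the functions $\delta_0^{1/2}(m_0)\,l(m_0)f_{P_0}$ for $f\in I$, $m_0\in M_0$. By the Satake isomorphism $Sf=\delta_0^{-1/2}f_{P_0}$, which identifies $\mathcal H_K$ with $\mathcal A^W$ and $I$ with $\mathfrak m=\mathfrak m_I$, one has $\{f_{P_0}:f\in I\}=\delta_0^{1/2}\mathfrak m$ (pointwise product with the character $\delta_0^{1/2}$). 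Next, since $\delta_0$ is a character, the identity $\delta_0^{1/2}(m_0)\,l(m_0)(\delta_0^{1/2}h)=\delta_0^{1/2}\cdot l(m_0)h$ holds for all $h\in\mathcal A$, so $J=\delta_0^{1/2}\cdot\operatorname{span}\{\,l(m_0)h:h\in\mathfrak m,\ m_0\in M_0\,\}$. The operators $l(m_0)$ are convolution by $1_{m_0{}^0\!M_0}$, and these functions span $\mathcal A$ as $m_0$ ranges over $M_0$, so $\operatorname{span}\{l(m_0)h:m_0\in M_0\}=\mathcal A\star h$; commutativity of $\mathcal A$ then gives $\operatorname{span}\{l(m_0)h:h\in\mathfrak m,\ m_0\in M_0\}=\mathcal A\star\mathfrak m=\mathfrak m\mathcal A$, whence $J=\delta_0^{1/2}\cdot(\mathfrak m\mathcal A)$. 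Since multiplication by the character $\delta_0^{1/2}$ is a linear automorphism of $\mathcal A$ which intertwines plain left translation with the action $m_0\mapsto\delta_0^{1/2}(m_0)l(m_0)$ and carries $\mathfrak m\mathcal A$ onto $J$, this yields the first isomorphism $r_{M_0,G}(\mathcal W(I,K))\simeq\mathcal A/\mathfrak m\mathcal A$ with the $\delta_0^{1/2}$-twisted left-translation action as in the statement. For the second isomorphism I would pass to the $W$-invariant part, which under $S$ is the $K$-biinvariant part: the averaging projector $|W|^{-1}\sum_{w\in W}w$ is $\mathcal A^W$-linear and exhibits $\mathcal A^W$ as a direct summand of $\mathcal A$ over $\mathcal A^W$, so $(\mathcal A/\mathfrak m\mathcal A)^W=\mathcal A^W/\mathfrak m\mathcal A^W$, which via $S^{-1}$ is $\mathcal H_K/I$ (using $\mathfrak m\mathcal A^W=\mathfrak m$ and $S^{-1}(\mathfrak m)=I$).

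The step I expect to be the main, and really the only, obstacle is the bookkeeping of the modular twists: one must keep straight the $\delta_0^{1/2}$ built into the normalized Jacquet functor, the realization $\delta_0^{1/2}(m_0)l(m_0)$ of that action on constant terms coming from Lemma~\ref{cjm-1}, and the $\delta_0^{-1/2}$ in the Satake normalization, and check that they combine precisely so that the image of $I$ generates the honest ideal $\mathfrak m\mathcal A$ up to the single residual character twist recorded in the statement. Once commutativity of $\mathcal A$ has been used to replace ``$M_0$-submodule generated by $\mathfrak m$'' by ``the ideal $\mathfrak m\mathcal A$'', and the splitting of $\mathcal A$ over $\mathcal A^W$ is invoked for the $W$-invariants, the rest is formal.
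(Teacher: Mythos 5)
Your strategy is the natural one, and since the paper states Corollary~\ref{cjm-8000} without proof it is essentially the argument the author has in mind: transport the ideal $J$ of Corollary~\ref{cjm-7} through the Satake map, using that pointwise multiplication by the character $\delta_0^{1/2}$ is an automorphism of the \emph{convolution} algebra $\mathcal A$ (because $M_0/{}^0\!M_0$ is abelian) and that $\mathcal A\star\mathfrak m=\mathfrak m\mathcal A$ by commutativity. The individual algebraic steps — $f_{P_0}=\delta_0^{1/2}\cdot Sf$, the identity $\delta_0^{1/2}(m_0)l(m_0)(\delta_0^{1/2}h)=\delta_0^{1/2}\cdot l(m_0)h$, the span computation $\operatorname{span}\{l(m_0)h\}=\mathcal A\star h$, and the averaging-projector argument for $W$-invariants — are all correct.

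However, your closing attribution for the first isomorphism contradicts what your own computation establishes. You correctly show that $\phi=(\text{pointwise mult.\ by }\delta_0^{1/2})$ is an $M_0$-equivariant isomorphism from $(\mathcal A,\,l)$ to $(\mathcal A,\,\delta_0^{1/2}l)$ carrying $\mathfrak m\mathcal A$ onto $J$. Corollary~\ref{cjm-7} realizes $r_{M_0,G}(\mathcal W(I,K))$ as $\mathcal A/J$ with the \emph{twisted} action, so composing with $\phi^{-1}$ gives $r_{M_0,G}(\mathcal W(I,K))\simeq\mathcal A/\mathfrak m\mathcal A$ with the \emph{plain} left-translation action, not the twisted one. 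Your sentence ``with the $\delta_0^{1/2}$-twisted left-translation action as in the statement'' therefore does not follow from the preceding lines. This is not merely bookkeeping: the untwisted reading is the one the paper needs in Proposition~\ref{cjm-8}, where the composition factors of $r_{M_0,G}(\mathcal W(I,K))$ are matched with $\{w(\chi):w\in W\}$, and under plain left translation $\mathcal A/\mathfrak m_{w\chi}$ \emph{is} the character $w\chi$, whereas under the twisted action one would get $\delta_0^{1/2}w(\chi)$. (Check against $\GL 2$ with $V$ trivial: $\chi=\delta_0^{1/2}$, the Jacquet module of $\Ind_{P_0}^G(\delta_0^{1/2})$ is $\delta_0^{1/2}\oplus\delta_0^{-1/2}$, and the untwisted action on $\mathcal A/\mathfrak m\mathcal A$ gives exactly these; the twisted one would give $\delta_0\oplus\mathbf 1$.) So you should either flag the ``twisted by $\delta_0^{1/2}$'' in the statement as a misprint, or at minimum not claim your argument reproduces it.

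One further small point: the expression $r_{M_0,G}(\mathcal H_K/I)$ in the statement is a notational abuse, since $\mathcal H_K/I$ is only an $\mathcal H_K$-module and the Jacquet functor does not literally apply to it. Your reading — that it means the Satake identification $\mathcal H_K/I\simeq\mathcal A^W/\mathfrak m$ obtained by restricting the constant-term map to $K$-biinvariant functions, equivalently the $W$-invariants of the first isomorphism — is surely the intended one; it would be cleaner to say so explicitly rather than silently pass to $W$-invariants.
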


\vskip .2in 
We also recall that for $\chi\in \Psi(M_0)$ defines a $\mathbb C$--algebra homomorphism $\mathcal A\longrightarrow \mathbb C$ (\cite{cartier}, page 148) given by
$$
f\longmapsto \int_M f(m)\chi(m) dm.
$$
Two unramified characters define the same $\mathbb C$--algebra homomorphism $\mathcal A^W\longrightarrow \mathbb C$ if and only if are  $W$--conjugate.
In fact, $\mathcal A$ is the algebra of regular functions on
complex affine variety $\Psi(M_0)$ and  $\mathcal A^W$ is the algebra of regular functions on the affine variety of the $W$--orbits. In particular, 
maximal ideals in $\mathcal A^W$  are in one--to--one correspondence with  $W$--orbits of unramified characters in $\Psi(M_0)$: $\mathfrak m\leftrightarrow \mathcal O_{\mathfrak m}$.
Thus, there exists a one--to--one correspondence between maximal ideals in $\mathcal H_K$, and   $W$--orbits of unramified characters in $\Psi(M_0)$: $I\leftrightarrow \mathcal O_{\mathfrak m_I}$.
We say that $I$ (or $\mathfrak m_I$) is regular if $\# \mathcal O_{\mathfrak m_I}=\# W$. This is equivalent to the fact that orbit contains $W$--regular (i.e., 
  its stabilizer in $W$ is trivial) unramified character.

\vskip .2in
We have the following:

\vskip .2in 
\begin{Prop}\label{cjm-8} Assume that $I\subset \mathcal H_K$ is a regular  maximal left ideal. Let  $\chi\in \mathcal O_{\mathfrak m_I}$. 
Then, $\chi$ is $W$--regular, and we have an isomorphism of $M_0$--modules
  $$
  r_{M_0, G}\left(\mathcal W(I, K)\right)\simeq  r_{M_0, G}\left(\Ind_{P_0}^G(\chi)\right)\simeq \oplus_{w\in W} \ w(\chi).
  $$
\end{Prop}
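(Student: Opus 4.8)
The plan is to reduce the proposition to two separate Jacquet module computations and then match them. The assertion that $\chi$ is $W$-regular is immediate from the definitions: by the definition of a regular maximal ideal one has $\# \mathcal O_{\mathfrak m_I} = \# W$, and since $W$ acts transitively on the orbit $\mathcal O_{\mathfrak m_I} = W\chi$, this forces the stabilizer of $\chi$ in $W$ to be trivial, i.e. the $\# W$ unramified characters $w(\chi)$, $w \in W$, are pairwise distinct. (In particular the statement is independent of the choice of $\chi$ in its orbit.) It then remains to prove $r_{M_0, G}(\mathcal W(I, K)) \simeq \bigoplus_{w\in W} w(\chi)$ and $r_{M_0, G}(\Ind_{P_0}^G(\chi)) \simeq \bigoplus_{w\in W} w(\chi)$.

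For the induced representation I would simply invoke the standard computation of the Jacquet module of a representation parabolically induced from a minimal parabolic (Bernstein--Zelevinsky's geometric lemma; see \cite{cas} or \cite{renard}): since $P_0$ is minimal, the double cosets $P_0\backslash G / P_0$ are parametrized by $W$, and $r_{M_0,G}(\Ind_{P_0}^G(\chi))$ carries a filtration whose graded pieces are the one--dimensional $M_0$--modules $w(\chi)$, $w\in W$. Because $\chi$ is $W$-regular these characters are pairwise distinct, so the filtration splits and $r_{M_0,G}(\Ind_{P_0}^G(\chi)) \simeq \bigoplus_{w\in W} w(\chi)$.

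The main work is the computation of $r_{M_0,G}(\mathcal W(I,K))$. By Corollary \ref{cjm-8000} this $M_0$--module is $\mathcal A/\mathfrak m\mathcal A$ with $\mathfrak m = \mathfrak m_I$, the action being the appropriately $\delta_0^{1/2}$--normalized left translation on $\mathcal A = C_c^\infty(M_0/{}^0\!M_0)$, i.e. on the ring of regular functions on the complex torus $\Psi(M_0)$, with $\mathcal A^W$ the ring of regular functions on $\Psi(M_0)/W$. The maximal ideal $\mathfrak m$ corresponds to the $W$-orbit $\mathcal O_{\mathfrak m} = W\chi \subset \Psi(M_0)$, which by $W$-regularity consists of $\# W$ distinct points; thus $\mathrm{Spec}(\mathcal A/\mathfrak m\mathcal A)$ is the scheme--theoretic fibre over $\mathcal O_{\mathfrak m}$ of the finite quotient map $\pi\colon \Psi(M_0)\to \Psi(M_0)/W$. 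I would show this fibre is reduced: since $\chi$ lies outside the fixed--point subvariety $\Psi(M_0)^w$ for every $w\neq 1$, the $W$-stable open subvariety $X = \Psi(M_0)\setminus\bigcup_{w\neq 1}\Psi(M_0)^w$ contains the whole orbit $W\chi$ and $W$ acts freely on $X$; hence $\pi|_X\colon X\to X/W$ is finite \'etale (characteristic $0$, free action), and $X/W$ is the open subvariety of $\Psi(M_0)/W$ complementary to the closed set $\pi(\bigcup_{w\neq1}\Psi(M_0)^w)$. Therefore the fibre over $\pi(\chi)\in X/W$ is \'etale over $\mathbb C$, i.e. $\mathcal A/\mathfrak m\mathcal A \simeq \prod_{w\in W}\mathbb C$, the $w$-th projection being evaluation at $w(\chi)$. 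Tracking the normalized left--translation action of $M_0$ through this decomposition (the left translations on $\mathbb C[M_0/{}^0\!M_0]$ act by the scalars $w(\chi)(m)$ on the $w$-th factor), one gets $r_{M_0,G}(\mathcal W(I,K)) \simeq \bigoplus_{w\in W} w(\chi)$, which completes the proof.

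The one genuinely non--formal ingredient is the reducedness of $\mathcal A/\mathfrak m\mathcal A$ (equivalently, that $\dim_{\mathbb C}\mathcal A/\mathfrak m\mathcal A = \# W$ rather than merely $\geq \# W$); this is precisely where $W$-regularity of $\chi$ enters, and it is handled by the \'etale--at--a--regular--point argument above. (When applicable one could instead deduce it from the freeness of $\mathcal A$ over $\mathcal A^W$ in the style of Pittie--Steinberg, but the geometric argument needs no extra hypotheses on the torus $\Psi(M_0)$.) Everything else is either a citation — the geometric lemma, exactness of the Jacquet functor (\cite{cas}, 3.2.3), and the standard facts about finite quotients of affine varieties (\cite{sha}) — or the routine $\delta_0^{1/2}$--bookkeeping already recorded in Corollaries \ref{cjm-7} and \ref{cjm-8000}.
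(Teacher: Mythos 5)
Your proof is correct, and the identification of $r_{M_0,G}\left(\Ind_{P_0}^G(\chi)\right)$ and the use of Corollary \ref{cjm-8000} match the paper. But the key step --- showing that the graded pieces $w(\chi)$ appear with multiplicity exactly one in $r_{M_0,G}(\mathcal W(I,K)) \simeq \mathcal A/\mathfrak m\mathcal A$ --- is handled by a genuinely different argument. The paper first applies its Lemma \ref{cjm-9} (support of a finitely generated module, Nullstellensatz, Chinese Remainder Theorem) to conclude only that $\mathcal A/\mathfrak m\mathcal A$ has composition factors $\mathcal A/\mathfrak m_w$, $w\in W$, appearing with a common but undetermined multiplicity $L$; it then pins down $L=1$ by importing representation theory: Theorem \ref{intr-m-1} gives finite length with subquotients among those of $\Ind_{P_0}^G(\chi)$ and multiplicity one for the $K$-spherical subquotient $\pi$, and $W$-regularity forces the characters $w(\chi)$ occurring in $r_{M_0,G}(\pi)$ to occur in no other subquotient. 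Your proof instead establishes directly that $\mathcal A/\mathfrak m\mathcal A$ is reduced: the orbit $W\chi$ lies in the $W$-stable open locus $X = \Psi(M_0)\setminus\bigcup_{w\neq 1}\Psi(M_0)^w$ where $W$ acts freely, the finite quotient $X \to X/W$ is étale in characteristic $0$, and the scheme-theoretic fibre over $\pi(\chi)$ is therefore a product of copies of $\mathbb C$. This is purely algebro-geometric, bypasses Lemma \ref{cjm-9} entirely, and makes the computation of $r_{M_0,G}(\mathcal W(I,K))$ independent of the finite-length/multiplicity-one input from Theorem \ref{intr-m-1}, which is a structural simplification; the trade-off is that the paper's route is more elementary commutative algebra and avoids invoking smooth free-quotient theory, relying instead on representation-theoretic facts it has already established. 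Both proofs leave the $\delta_0^{1/2}$-normalization bookkeeping at the same informal level, so neither is more rigorous than the other on that point.
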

\begin{proof} The isomorphism
  $$
  r_{M_0, G}\left(\Ind_{P_0}^G(\chi)\right)\simeq \oplus_{w\in W} \ w(\chi)
  $$
  is well--known (see \cite{cas}, Proposition 6.4.1). It remains to prove

  \begin{equation}\label{cjm-80} 
  r_{M_0, G}\left(\mathcal W(I, K)\right)\simeq \oplus_{w\in W} \ w(\chi).
  \end{equation}

  First, let $\pi$ be a unique irreducible subqoutient of $\Ind_{P_0}^G(\chi)$ which is $K$--spherical. Then, there exists a subset $S\subset W$ such that
   $$
  r_{M_0, G}\left(\pi\right)\simeq \oplus_{w\in S} \ w(\chi).
  $$
  Since   $\chi$ is $W$--regular, for any other irreducible subquotient $\pi'$ of $\Ind_{P_0}^G(\chi)$, $w(\chi)$, $w\in S$, does not show up in  $r_{M_0, G}\left(\pi'\right)$.

Next, by above description of $r_{M_0, G}\left(\mathcal W(I, K)\right)$ (see Corollary \ref{cjm-8000}), and  general Lemma \ref{cjm-9}, there exists a positive integer $L$ such that the 
  semi--simplification of  $r_{M_0, G}\left(\mathcal W(I, K)\right)$ is given by
  $$
  r_{M_0, G}\left(\mathcal W(I, K)\right)= L\cdot \sum_{w\in W} \ w(\chi)
  $$
  in the Grothendieck group of finite  length admissible modules of $M_0$.

  By  Theorem \ref{intr-m-1}, $\mathcal W(I, K)$ has finite length, all of its irreducible subqoutients are among those of $\Ind_{P_0}^G(\chi)$, and $\pi$ appears
  with multiplicity one in its composition series.

  This implies   $L=1$ because $w(\chi)$, $w\in S$, show up only in  $r_{M_0, G}\left(\pi\right)$, and $\pi$ appears with
  multiplicity one in the composition series of  $\mathcal W(I, K)$.

  Finally, since $\chi$ is regular, we obtain (\ref{cjm-80}). 
\end{proof}

\vskip .2in
We include the following  general lemma. Before we state the lemma, we introduce some notation.  Let $X$ be a complex affine variety, and
let $\mathcal A$ be its algebra of regular functions.
Assume that a finite group $W$ acts on $X$ as a group of regular (algebraic)
transformations. Let $Y$   be the affine variety of $W$--orbits. Then, by standard theory,  its algebra of regular functions is $\mathcal A^W$.

\begin{Lem}\label{cjm-9}
    Let $x\in X$ be a point such that the orbit $W.x$ has $\# W$ --distinct elements (a generic orbit). Let
  $\mathfrak m_w$ be the maximal ideal that corresponds to $w.x$ for $w\in W$. Then, $$\mathfrak m\overset{def}{=}
  \mathfrak m_w\cap \mathcal A^W$$ is independent of $w\in W$. As a $\mathcal A$--module, $\mathcal A/ \mathfrak m \mathcal A$ has filtration by irreducible $\mathcal A$--modules
  $ \mathcal A/ m_w$, $w\in W$, where their multiplicity in the composition series is independent of $w$. 
  \end{Lem}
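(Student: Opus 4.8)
The plan is to work locally at the orbit $W.x$ by localizing the finite ring extension $\mathcal A^W\subset\mathcal A$ at the maximal ideal $\mathfrak m=\mathfrak m_w\cap\mathcal A^W$ (independence of $w$ being immediate: the $W$-orbit of a maximal ideal of $\mathcal A$ lying over a fixed maximal ideal of $\mathcal A^W$ is transitive, so all the $\mathfrak m_w$ contract to the same $\mathfrak m$; this is the standard going-up/transitivity statement for $\mathcal A$ integral over $\mathcal A^W$, e.g.\ Atiyah--Macdonald Ch.~5). First I would pass to the $\mathfrak m$-adic completion, or simply localize: $\mathcal A^W_{\mathfrak m}\subset\mathcal A\otimes_{\mathcal A^W}\mathcal A^W_{\mathfrak m}$ is a finite extension whose maximal ideals are exactly the (distinct, by the genericity hypothesis $\#W.x=\#W$) localizations of the $\mathfrak m_w$, $w\in W$. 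By the Chinese Remainder / primary decomposition for semilocal rings, $\mathcal A\otimes_{\mathcal A^W}\mathcal A^W_{\mathfrak m}$ splits as a product $\prod_{w\in W}\mathcal A_w$ of local rings $\mathcal A_w=\mathcal A_{\mathfrak m_w}$.

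Next I would compute $\mathcal A/\mathfrak m\mathcal A$ after this localization: $(\mathcal A/\mathfrak m\mathcal A)_{\mathfrak m}=\mathcal A\otimes_{\mathcal A^W}(\mathcal A^W_{\mathfrak m}/\mathfrak m\mathcal A^W_{\mathfrak m})=\mathcal A\otimes_{\mathcal A^W}(\mathcal A^W_{\mathfrak m}/\mathfrak m)$, a finite-dimensional $\mathbb C$-algebra, which under the product decomposition above becomes $\prod_{w\in W}(\mathcal A_w/\mathfrak m\mathcal A_w)$. Each factor $\mathcal A_w/\mathfrak m\mathcal A_w$ is a finite-dimensional local $\mathbb C$-algebra with residue field $\mathbb C$ (as $\mathcal A/\mathfrak m_w=\mathbb C$), hence its only irreducible module is $\mathcal A/\mathfrak m_w$, appearing in any composition series with multiplicity $\dim_{\mathbb C}(\mathcal A_w/\mathfrak m\mathcal A_w)$. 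So as an $\mathcal A$-module, $\mathcal A/\mathfrak m\mathcal A$ has a filtration by the irreducibles $\mathcal A/\mathfrak m_w$, $w\in W$, the multiplicity of $\mathcal A/\mathfrak m_w$ being $\dim_{\mathbb C}(\mathcal A_w/\mathfrak m\mathcal A_w)$.

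Finally I would prove this multiplicity is independent of $w$. The key point is that the $W$-action permutes the factors: for $\sigma\in W$, the automorphism $\sigma$ of $\mathcal A$ fixes $\mathcal A^W$, hence fixes $\mathfrak m$, and carries $\mathfrak m_w$ to $\mathfrak m_{\sigma w}$; therefore it induces a $\mathbb C$-algebra isomorphism $\mathcal A_w/\mathfrak m\mathcal A_w\xrightarrow{\sim}\mathcal A_{\sigma w}/\mathfrak m\mathcal A_{\sigma w}$. Since $W$ acts transitively on itself, all these local quotients are isomorphic, so their $\mathbb C$-dimensions agree; equivalently, the common multiplicity is $\dim_{\mathbb C}(\mathcal A/\mathfrak m\mathcal A)/\#W$. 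This gives the claimed filtration with $w$-independent multiplicities.

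The only mildly delicate point is the localization/splitting step: one must know that $\mathcal A$ is a finite (equivalently, integral and finitely generated) $\mathcal A^W$-module so that $\mathcal A\otimes_{\mathcal A^W}\mathcal A^W_{\mathfrak m}$ is semilocal with maximal ideals precisely the $\mathfrak m_w\mathcal A^W_{\mathfrak m}$ — this is classical invariant theory for a finite group acting on a finitely generated $\mathbb C$-algebra (Noether), and the genericity hypothesis $\#W.x=\#W$ is exactly what guarantees these $\#W$ maximal ideals are pairwise distinct so that the product decomposition is over $W$ with no collapsing. Everything else is routine commutative algebra of Artinian rings.
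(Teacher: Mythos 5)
Your proof is correct, and it takes a genuinely different route from the paper's. The paper works entirely with the support of the finitely generated $\mathcal A$--module $M=\mathcal A/\mathfrak m\mathcal A$: it invokes the general filtration of a finitely generated module by quotients $\mathcal A/\mathfrak p$ with $\mathfrak p\in\supp(M)$ (citing Lang), computes $\mathrm{rad}(\mathrm{ann}(M))=\mathfrak m\mathcal A$ via the Nullstellensatz to identify $\cap_{\mathfrak p\in\supp(M)}\mathfrak p=\cap_w\mathfrak m_w$, and then uses a primality argument to pin down $\supp(M)=\{\mathfrak m_w\}_{w\in W}$; the $W$--equality of multiplicities comes at the end by the same ``$W$ permutes composition factors'' observation you use. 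Your argument instead passes to the Artinian quotient directly: $\mathcal A/\mathfrak m\mathcal A$ is a finite--dimensional $\mathbb C$--algebra (because $\mathcal A$ is a finite $\mathcal A^W$--module and $\mathcal A^W/\mathfrak m=\mathbb C$), hence a product of local Artinian rings $B_w$ indexed exactly by the maximal ideals $\mathfrak m_w$ lying over $\mathfrak m$, which are pairwise distinct by the genericity hypothesis; the multiplicity of $\mathcal A/\mathfrak m_w$ is then visibly $\dim_{\mathbb C}B_w$, and $W$ permutes the $B_w$ transitively, so all these dimensions are equal to $\dim_{\mathbb C}(\mathcal A/\mathfrak m\mathcal A)/\#W$. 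Your route is a bit more self--contained (no appeal to the general support--filtration machinery), and it yields an explicit formula for the common multiplicity, which the paper's argument does not. One small stylistic remark: the localization $\mathcal A\otimes_{\mathcal A^W}\mathcal A^W_{\mathfrak m}$ is a detour you do not strictly need, since once you form $\mathcal A/\mathfrak m\mathcal A$ you already have a finite--dimensional (hence Artinian) $\mathbb C$--algebra and its decomposition into local factors is immediate; but this does not affect correctness.
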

\begin{proof}  We use Nullstellensatz, and results about  the support of finitely generated modules (see \cite{lang}, Chapter X, Section 2). Put
  $$
  M\overset{def}{=}   \mathcal A/ \mathfrak m \mathcal A.
  $$
  This is a finitely generated $\mathcal A$--module which by the general theory has filtration by $\mathcal A/\mathfrak p$ where $\mathfrak p$ ranges over primes in the support $\supp{(M)}$ of $M$.
  By definition,  $\mathfrak p\in \supp{(M)}$ if and only if the localization $M_{\mathfrak p}$ satisfies $M_{\mathfrak p}\neq 0$. The radical of the annihilator of $M$ is given by the following
  general formula:
  \begin{equation}\label{cjm-10}
  \text{rad}\left(\text{ann}\left(M\right)\right)=\cap_{\mathfrak p\in \supp{(M)}}\mathfrak p. 
      \end{equation}
   It is obvious that
  $$
   \text{ann}\left(M\right)= \mathfrak m \mathcal A, 
   $$
   and by Nullstellensatz
     \begin{equation}\label{cjm-11}
   \text{rad}\left( \mathfrak m \mathcal A\right)=\cap_{w\in W}\mathfrak m_w.
   \end{equation}
   This implies that we have an epimorphism of $\mathcal A$--modules
    $$
   M=\mathcal A/ \mathfrak m \mathcal A \longrightarrow \mathcal A/ \cap_{w\in W}\mathfrak m_w   \longrightarrow  0.
   $$ 
   Next, by the Chinese remainder theorem, this gives the following epimorphism of  $\mathcal A$--modules:
   $$
   M\longrightarrow \oplus_{w\in W} \ \mathcal A/  \mathfrak m_w \longrightarrow  0.
   $$
  Thus, 
   $$
   \left\{\mathfrak m_w; \ \ w\in W\right\}\subset  \supp{(M)}.
   $$
   But (\ref{cjm-10}) and (\ref{cjm-11}) imply
   $$
   \cap_{w\in W}\mathfrak m_w= \cap_{\mathfrak p\in \supp{(M)}}\mathfrak p.
   $$
   Then, for each $\mathfrak q\in \supp{(M)}$, we have
   $$
   \prod_{w\in W}\mathfrak m_w \subset \cap_{w\in W}\mathfrak m_w \subset \mathfrak q.
   $$
   Then,  the definition of a prime ideal implies that
   $$
   \mathfrak m_w \subset \mathfrak q,
   $$
   for some $w$.    Since $\mathfrak m_w$ is a maximal ideal, we obtain $\mathfrak q=\mathfrak m_w$. Thus, we
   have
   $$
   \supp{(M)}=   \left\{\mathfrak m_w; \ \ w\in W\right\}.
   $$

   Finally, this implies that $M$  has filtration by irreducible $\mathcal A$--modules
  $ \mathcal A/ m_w$, $w\in W$, where their multiplicity in the composition series is independent of $w$ since $W$--permutes composition factors. 
   \end{proof}

\end{document}